\makeatletter \@addtoreset{equation}{section} \makeatother
\numberwithin{equation}{section}
\newtheorem{theorem}{Theorem}[section]
\newtheorem{lemma}[theorem]{Lemma}
\newtheorem{definition}[theorem]{Definition}
\newtheorem{proposition}[theorem]{Proposition}
\numberwithin{equation}{section}
\begin{document}

\title[Nonlocal Sobolev inequality]
{Remainder terms of a nonlocal Sobolev inequality}

\author[S. Deng]{Shengbing Deng}
\address{\noindent Shengbing Deng  \newline
School of Mathematics and Statistics, Southwest University,
Chongqing 400715, People's Republic of China}\email{shbdeng@swu.edu.cn}

\author[X. Tian]{Xingliang Tian}
\address{\noindent Xingliang Tian  \newline
School of Mathematics and Statistics, Southwest University,
Chongqing 400715, People's Republic of China.}\email{xltian@email.swu.edu.cn}

\author[M. Yang]{Minbo Yang}
\address{\noindent Minbo Yang  \newline
Department of Mathematics, Zhejiang Normal University,
Jinhua 321004, Zhejiang, People's Republic of China.}\email{mbyang@zjnu.edu.cn}

\author[S. Zhao]{Shunneng Zhao}
\address{\noindent Shunneng Zhao  \newline
Department of Mathematics, Yunnan Normal University,
Kunming 650500, Yunnan, People's Republic of China.}\email{snzhao@zjnu.edu.cn}

\thanks{2020 {\em{Mathematics Subject Classification.}} Primary 35P30, 35J20;  Secondly 46E35.}

\thanks{{\em{Key words and phrases.}} Nonlocal Sobolev inequality; Hardy-Littlewood-Sobolev inequality; Nonlinear eigenvalue problems; Reminder terms.}

\allowdisplaybreaks

\begin{abstract}
{\tiny In this note we study a nonlocal version of the Sobolev inequality
\begin{equation*}
\int_{\mathbb{R}^N}|\nabla u|^2 dx \geq S_{HLS}\left(\int_{\mathbb{R}^N}\big(|x|^{-\alpha} \ast u^{2_\alpha^{\ast}}\big)u^{2_\alpha^{\ast}} dx\right)^{\frac{1}{2_\alpha^{\ast}}}, \quad \forall u\in \mathcal{D}^{1,2}(\mathbb{R}^N),
\end{equation*}
where $S_{HLS}$ is the best constant, $\ast$ denotes the standard convolution and $\mathcal{D}^{1,2}(\mathbb{R}^N)$ denotes the classical Sobolev space with respect to the norm $\|u\|_{\mathcal{D}^{1,2}(\mathbb{R}^N)}=\|\nabla u\|_{L^2(\mathbb{R}^N)}$. By using the nondegeneracy property of the extremal functions, we prove that the existence of the gradient type remainder term and a reminder term in the weak $L^{\frac{N}{N-2}}$-norm of above inequality for all $0<\alpha<N$.
    }
\end{abstract}

\vspace{3mm}

\maketitle

\section{Introduction}\label{sectid}

\subsection{Motivation}
Recall that the classical Sobolev inequality: for $N\geq 3$ there exists $\mathcal{S}=\mathcal{S}(N)>0$ such that
\begin{equation}\label{bsic}
\|\nabla u\|^2_{L^2(\mathbb{R}^N)}\geq \mathcal{S}\|u\|^2_{L^{2^*}(\mathbb{R}^N)},\quad  \forall u\in \mathcal{D}^{1,2}(\mathbb{R}^N),
\end{equation}
where $2^*=2N/(N-2)$ and $\mathcal{D}^{1,2}(\mathbb{R}^N)$ denotes the closure of $C^\infty_c(\mathbb{R}^N)$ with respect to the norm $\|u\|_{\mathcal{D}^{1,2}(\mathbb{R}^N)}=\|\nabla u\|_{L^2(\mathbb{R}^N)}$. It is well known that the Euler-Lagrange equation associated to (\ref{bsic}) is
\begin{equation}\label{bec}
-\Delta u=|u|^{2^*-2}u\quad \mbox{in}\ \mathbb{R}^N.
\end{equation}
By Caffarelli et al. \cite{CGS89} and Gidas et al. \cite{GNN79}, it is known that all positive solutions of equation (\ref{bec}) are Talenti functions \cite{Ta76} $V_{z,\lambda}(x)=\lambda^\frac{N-2}{2}V(\lambda(x-z))$ with $z\in\mathbb{R}^N$ and $\lambda>0$, where \[V(x)=[N(N-2)]^{\frac{N-2}{4}}({1+|x|^2})^{-\frac{N-2}{2}}.\]
In a celebrated paper \cite{BrL85}, Br\'{e}zis and Lieb asked the question whether a remainder term proportional to the quadratic distance of the function $u$ to be the manifold $\mathcal{M}_0:=\{cV_{z,\lambda}: z\in\mathbb{R}^N, c\in\mathbb{R}, \lambda>0\}$ - can be added to the right hand side of (\ref{bsic}).  This question was answered affirmatively by Bianchi and Egnell \cite{BE91}. To study the remainder term for the Sobolev inequality (\ref{bsic}), Bianchi and Egnell \cite{BE91} (see also \cite[Lemma 3.1]{AGP99}) established firstly the non-degeneracy of  $V$ was given, that is, solutions of the linearized equation
\begin{equation}
-\Delta v=(2^*-1)V^{2^*-2}v\quad \mbox{in}\ \mathbb{R}^N,\quad v\in \mathcal{D}^{1,2}(\mathbb{R}^N),
\end{equation}
are linear combinations of functions $\frac{N-2}{2}V+x\cdot\nabla V$ and $\partial_{x_i} V$, $i=1,\ldots,N$.
  It is worth mentioning that the non-degeneracy property of solutions has many significant applications in studying the concentration behavior or the blow-up phenomena of the solutions.  The remainder term results established by Bianchi and Egnell \cite{BE91} were  extended later to the biharmonic case \cite{LW00}, the polyharmonic case \cite{BWW03} and the fractional-order case \cite{CFW13}. In addition, Figalli and Neumayer \cite{FN19}, Figalli and Zhang \cite{FZ22} obtained the remainder terms of $p$-Laplace Sobolev inequality. Recently, Ciraolo et al. \cite{CFM18}, Figalli and Glaudo \cite{FG20}, Deng et al. \cite{DSW21} obtained sharp quantitative estimates of Struwe's decomposition (see \cite{St84}) for Sobolev inequality (\ref{bsic}) by using the finite-dimensional reduction method. It is worth to mention that Wang and Willem \cite{WaWi03} obtained the remainder terms of Caffarelli-Kohn-Nirenberg inequalities (see \cite{CKN84}), Wei and Wu \cite{WW22} established the stability of the profile decompositions to the Caffarelli-Kohn-Nirenberg inequalities and also gave the gradient type remainder terms.

To state the motivation of the present problem, we also need to recall the well-known Hardy-Littlewood-Sobolev (HLS for short) inequality, see \cite{Lieb83} and also \cite{DE22} for recent results about HLS inequality and the references therein.

    \begin{proposition}\label{prohlsi}
    ({\bfseries HLS inequality}) Let $r,t>1$ and $0<\alpha<N$ with $\frac{1}{r}+\frac{1}{t}+\frac{\alpha}{N}=2$, $u\in L^r(\mathbb{R}^N)$ and $v\in L^t(\mathbb{R}^N)$. There exists a sharp constant $C(N,r,t,\alpha)>0$, independent of $u$ and $v$, such that
    \begin{equation}\label{hlsi}
    \int_{\mathbb{R}^N}\int_{\mathbb{R}^N}\frac{u(x)v(y)}{|x-y|^\alpha}dydx\leq C(N,r,t,\alpha)\|u\|_{L^r(\mathbb{R}^N)}\|v\|_{L^t(\mathbb{R}^N)}.
    \end{equation}
    If $t=r=\frac{2N}{2N-\alpha}$, then
    \begin{equation}\label{defhlsbc}
    C(N,r,t,\alpha)=C(N,\alpha)=\frac{\Gamma((N-\alpha)/2)\pi^{\alpha/2}}{\Gamma(N-\alpha/2)}\left(\frac{\Gamma(N)}{\Gamma(N/2)}\right)^{(N-\alpha)/N},
    \end{equation}
    and there is equality in (\ref{hlsi}) if and only if $u\equiv (const.) v$ and
    \begin{equation*}
    v(x)=A(1+\lambda^2|x-z|^2)^{-(2N-\alpha)/2}
    \end{equation*}
    for some $A\in \mathbb{C}$, $\lambda\in \mathbb{R}\backslash\{0\}$ and $z\in \mathbb{R}^N$.
    \end{proposition}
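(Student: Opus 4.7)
The plan is to prove the HLS inequality (\ref{hlsi}) in two stages: first establish the inequality with some finite (not necessarily sharp) constant, and then in the conformally invariant diagonal case $r=t=\tfrac{2N}{2N-\alpha}$, identify the sharp value (\ref{defhlsbc}) together with the complete family of extremizers.

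\textbf{Non-sharp inequality.} The Riesz kernel $K(x)=|x|^{-\alpha}$ lies in the weak Lebesgue space $L^{N/\alpha,\infty}(\mathbb{R}^N)$, since the level sets $\{K>\lambda\}$ are balls of radius $\lambda^{-1/\alpha}$. By the weak Young inequality (equivalently the O'Neil convolution inequality in Lorentz spaces), convolution with $K$ maps $L^{r}(\mathbb{R}^N)$ into $L^{t'}(\mathbb{R}^N)$, where $\tfrac{1}{t'}=1-\tfrac{1}{t}=\tfrac{1}{r}+\tfrac{\alpha}{N}-1$ (the scaling condition $\tfrac{1}{r}+\tfrac{1}{t}+\tfrac{\alpha}{N}=2$ is exactly what makes these exponents consistent). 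Pairing against $v\in L^t(\mathbb{R}^N)$ via Hölder then yields (\ref{hlsi}) with some finite constant. A more elementary alternative is Marcinkiewicz interpolation applied to the bilinear form
\begin{equation*}
T(u,v)=\int_{\mathbb{R}^N}\int_{\mathbb{R}^N}\frac{u(x)v(y)}{|x-y|^\alpha}\,dy\,dx,
\end{equation*}
after dyadic decomposition $|x|^{-\alpha}=\sum_{k\in\mathbb{Z}} |x|^{-\alpha}\chi_{\{2^k\le|x|<2^{k+1}\}}$, which reduces the estimate to convolution with $L^1$ functions.

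\textbf{Sharp constant and extremizers.} In the diagonal case I would follow the strategy of Lieb. Step (i): the Riesz rearrangement inequality gives $T(u,v)\le T(u^{*},v^{*})$ while the $L^{r}$ norms are preserved under symmetric decreasing rearrangement, so any maximizing sequence may be assumed radial and nonincreasing. Step (ii): lift the problem to $S^{N}$ via inverse stereographic projection, which makes the functional conformally invariant and restores compactness on the compact manifold; combined with the rearrangement step this yields an actual maximizer. Step (iii): classify the maximizer through its Euler-Lagrange equation
\begin{equation*}
\bigl(|x|^{-\alpha}\ast v^{r-1}\bigr)(x)=\mu\,v(x)^{r-1},
\end{equation*}
showing that all positive solutions on $S^{N}$ are constants, either by a moving-plane argument adapted to the nonlocal integral equation or by the \emph{competing symmetries} iteration of Carlen-Loss, which alternates symmetric decreasing rearrangement with the conformal inversion $x\mapsto x/|x|^{2}$ and converges to a unique extremal modulo conformal automorphisms. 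Pulling back via stereographic projection gives exactly the family $v(x)=A(1+\lambda^{2}|x-z|^{2})^{-(2N-\alpha)/2}$. Substituting this back into $T(v,v)/\|v\|_{L^{r}}^{2}$ reduces to a Beta/Gamma integral and produces the explicit constant (\ref{defhlsbc}).

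\textbf{Main obstacle.} The existence of the inequality is soft; the genuine difficulty is the \emph{uniqueness modulo conformal symmetry} of the extremizer, together with the explicit evaluation of the sharp constant. Because the Euler-Lagrange equation is nonlocal, ODE/radial methods are unavailable, and the rigidity step requires either an integral moving-plane argument (using the conformal structure of $S^{N}$) or the Carlen-Loss iteration, whose convergence rests on nontrivial Lorentz-space compactness. All other ingredients (rearrangement, the final Beta-function computation) are essentially routine once this rigidity is in hand.
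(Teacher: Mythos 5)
The paper does not prove Proposition~\ref{prohlsi}; it states the HLS inequality as a known result and cites Lieb~\cite{Lieb83} (and \cite{DE22} for a survey). Your sketch is therefore doing more than the paper does, and it is an accurate high-level outline of the standard proof path: weak Young/O'Neil or dyadic decomposition plus Marcinkiewicz interpolation for the non-sharp bound, then Riesz rearrangement, lift to $S^{N}$ by inverse stereographic projection to recover compactness, and classification of extremizers via the nonlocal Euler--Lagrange equation, either by an integral moving-plane argument or by the Carlen--Loss competing-symmetries iteration. This matches Lieb's original strategy and its later refinements. The one caveat is that, as written, it remains a roadmap rather than a proof: the rigidity step (uniqueness modulo the conformal group) and the verification that the competing-symmetries iteration converges are substantial, and the explicit evaluation of $C(N,\alpha)$ from the Beta/Gamma integral should be checked against the diagonal exponent $r=t=\tfrac{2N}{2N-\alpha}$. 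Since the paper simply defers to \cite{Lieb83}, there is no discrepancy to flag, only the observation that your outline would need considerable expansion to stand alone.
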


    Therefore, it is natural to ask whether a remainder term can be added to the left hand side of (\ref{hlsi}) at least for the special case  $t=r=\frac{2N}{2N-\alpha}$. Successfully, Carlen \cite{Ca17} gave the affirmative answer, that is

\vskip0.25cm

    \noindent{\bf Theorem~A.} \cite[Theorem 1.5]{Ca17} {\it Let $N\geq 2$ and $0<\alpha<N$. There is a constant $\kappa_{HLS,\alpha}>0$ depending only on $N$ and $\alpha$ such that for all $u\in L^{\frac{2N}{2N-\alpha}}(\mathbb{R}^N)$, it holds that
    \begin{equation*}
    C(N,\alpha)\|u\|^2_{L^{\frac{2N}{2N-\alpha}}(\mathbb{R}^N)}-\int_{\mathbb{R}^N}\int_{\mathbb{R}^N}\frac{u(x)u(y)}{|x-y|^\alpha}dydx
    \geq \kappa_{HLS,\alpha}\inf_{v\in \mathcal{M}_{HLS,\alpha}}\|u-v\|^2_{L^{\frac{2N}{2N-\alpha}}(\mathbb{R}^N)}.
    \end{equation*}
    Here $C(N,\alpha)$ is the best constant as given in (\ref{defhlsbc}), and $\mathcal{M}_{HLS,\alpha}$ is the manifold of all  optimal functions, which is generated from $h(x)=(1+|x|^2)^{-(2N-\alpha)/2}$ by multiplication by a constant, translations and scalings.
    }

\vskip0.25cm

    In the present paper, we will give some other type remainder terms of HLS inequality for $0<\alpha<N$ which are different from Carlen's \cite{Ca17}.
    By the HLS inequality, we know
    \begin{equation*}
    \int_{\mathbb{R}^N}\int_{\mathbb{R}^N}\frac{|u(x)|^p|v(y)|^p}{|x-y|^\alpha}dydx
    \end{equation*}
    is well-defined if for $u\in L^{pt}(\mathbb{R}^N)$ satisfying
    \[\frac{2}{t}+\frac{\alpha}{N}=2.\]
     Therefore, for $u\in H^1(\mathbb{R}^N)$ where 
    $H^1(\mathbb{R}^N)=\{u\in L^2(\mathbb{R}^N)|\partial_{x_i}u\in L^2(\mathbb{R}^N), i=1,\ldots,N\}$, the Sobolev continuous embedding
    $H^1(\mathbb{R}^N)\hookrightarrow L^q(\mathbb{R}^N)$ for every $q\in \left[2,\frac{2N}{N-2}\right]$
    implies
    \begin{equation*}
    \frac{2N-\alpha}{N}\leq p\leq \frac{2N-\alpha}{N-2}.
    \end{equation*}
    Thus we may call $\frac{2N-\alpha}{N}$ the low critical exponent and $\frac{2N-\alpha}{N-2}$ the upper critical exponent due to the HLS inequality.

\subsection{Problem setup and main results}
Combining the Hardy-Littlewood-Sobolev inequality and the Sobolev inequality,   we are ging to consider the following nonlocal Sobolev inequality
\begin{equation}\label{Prm}
\int_{\mathbb{R}^N}|\nabla u|^2 dx
\geq S_{HLS}\left(\int_{\mathbb{R}^N}(|x|^{-\alpha} \ast|u|^{2^*_\alpha})|u|^{2^*_\alpha} dx\right)^{\frac{1}{2^*_\alpha}}, \quad \forall u\in \mathcal{D}^{1,2}(\mathbb{R}^N),
\end{equation}
for some positive constant $S_{HLS}$ depending only on $N$ and $\alpha$, where $N\geq 3$, $0<\alpha<N$, $2^*_{\alpha}=\frac{2N-\alpha}{N-2}$ and $\ast$ denotes convolution which means
 \[(f\ast g)(x)=\int_{\mathbb{R}^N}f(x-y)g(y)dy.\]

 The minimizing problem for the best constant $S_{HLS}$ in (\ref{Prm}) can be defined as
 \begin{equation*}
S_{HLS}= \inf_{u\in \mathcal{D}^{1,2}(\mathbb{R}^N)\backslash\{0\}}\frac{\int_{\mathbb{R}^N}|\nabla u|^2 dx}{\left(\int_{\mathbb{R}^N}(|x|^{-\alpha} \ast|u|^{2^*_\alpha})|u|^{2^*_\alpha} dx\right)^{\frac{1}{2^*_\alpha}}}.
\end{equation*}
Gao and Yang \cite{GY18} proved the best constant $S_{HLS}$ in (\ref{Prm}) satisfies
    \begin{equation*}
    S_{HLS}=\frac{\mathcal{S}}{[C(N,\alpha)]^{\frac{N-2}{2N-\alpha}}},
    \end{equation*}
    where $\mathcal{S}$ is the best Sobolev constant and $C(N,\alpha)$ is the sharp constant of HLS inequality given as in (\ref{defhlsbc}).  What's more, $S_{HLS}$ is achieved if and only if by
    \begin{equation}\label{defaf}
    cU_{\lambda,z,\alpha}(x)=c\lambda^{\frac{N-2}{2}}U_\alpha(\lambda (x-z))
    \end{equation}
    for some $c\in\mathbb{R}\backslash\{0\}$, $\lambda>0$ and $z\in \mathbb{R}^N$, where
    \begin{equation}\label{defU}
    U_\alpha(x)=\mathcal{S}^{\frac{(N-\alpha)(2-N)}{4(N-\alpha+2)}}[C(N,\alpha)]^{\frac{2-N}{2(N-\alpha+2)}}\frac{[N(N-2)]^{\frac{N-2}{4}}}{(1+|x|^2)^{\frac{N-2}{2}}},
    \end{equation}
    which satisfies the the Euler-Lagrange equation of (\ref{Prm}), that is
    \begin{equation}\label{ele}
    -\Delta u=(|x|^{-\alpha}\ast |u|^{2^*_\alpha})|u|^{2^*_\alpha-2}u \quad \mbox{in}\quad \mathbb{R}^N.
    \end{equation}
    See also \cite{AGSY17,DY19,GHPS19,Le18}. However, for every open set $\Omega$ of $\mathbb{R}^N$ for $N\geq 3$,
    \begin{equation*}
    	S_{HLS}(\Omega):= \inf_{u\in \mathcal{D}^{1,2}(\Omega)\backslash\{0\}}\frac{\int_{\Omega}|\nabla u|^2 dx}{\left(\int_{\Omega}\int_{\Omega}\frac{|u(y)|^{2^*_\alpha}|u(x)|^{2^*_\alpha}}{|x-y|^\alpha} dydx\right)^{\frac{1}{2^*_\alpha}}},
    \end{equation*}
    then $S_{HLS}(\Omega)=S_{HLS}$ while $S_{HLS}(\Omega)$ is never achieved except when $\Omega=\mathbb{R}^N$. Furthermore, Giacomoni, Wei and Yang \cite{GWY20} proved that if $\alpha$ is close to $0$, $U_\alpha(x)$ as in (\ref{defU}) is nondegenerate in the sense that solutions of the linearized equation
    \begin{equation}\label{deflp}
    -\Delta v=2^*_\alpha\left(|x|^{-\alpha} \ast (U_\alpha^{2^*_\alpha-1}v)\right)U_\alpha^{2^*_\alpha-1}
    + (2^*_\alpha-1)\left(|x|^{-\alpha} \ast U_\alpha^{2^*_\alpha}\right)U_\alpha^{2^*_\alpha-2}v\quad \mbox{in}\quad \mathbb{R}^N,
    \end{equation}
    $v\in \mathcal{D}^{1,2}(\mathbb{R}^N)$ are linear combinations of functions $\frac{N-2}{2}U_\alpha+x\cdot\nabla U_\alpha$ and $\partial_{x_i} U_\alpha$, $i=1,\ldots,N$.
    Du and Yang \cite{DY19} also proved that if $\alpha$ is close to $N$ with $N=3$ or $4$, $U_\alpha(x)$ as in (\ref{defU}) is nondegenerate. Furthermore, Gao et al \cite{GMYZ22} proved that when $N=6$, $U_\alpha(x)$ as in (\ref{defU}) with $\alpha=4$ is nondegenerate (see also \cite{XTang} for the case $\alpha=4$) and confirms an open
nondegeneracy problem in \cite{GMYZ22}. More recently, Li et al. \cite{XLi} applied the  spherical harmonic decomposition
and the Funk-Hecke formula of the spherical harmonic functions to obtain the
nondegeneracy of positive bubble solutions for critical Hartree equation (\ref{ele}).
More precisely, We summarize those results as follows:

    \vskip0.25cm

    \begin{proposition}\label{prondgr}
    Assume $N\geq 3$, $0<\alpha<N$ and let $U_{\lambda,z,\alpha}$ be as in (\ref{defaf}).
    Then the linearized operator of equation (\ref{ele}) at $U_{\lambda,z,\alpha}$ defined by
    \begin{equation*}
    \mathcal{L}(v)=-\Delta v-2^*_\alpha\left(|x|^{-\alpha} \ast (U_{\lambda,z,\alpha}^{2^*_\alpha-1}v)\right)U_{\lambda,z,\alpha}^{2^*_\alpha-1}
    - (2^*_\alpha-1)\left(|x|^{-\alpha} \ast U_{\lambda,z,\alpha}^{2^*_\alpha}\right)U_{\lambda,z,\alpha}^{2^*_\alpha-2}v
    \end{equation*}
    only admits solutions in $\mathcal{D}^{1,2}(\mathbb{R}^N)$ of the form
    \begin{equation*}
    v=\overline{a}D_\lambda U_{\lambda,z,\alpha} +\mathbf{b}\cdot\nabla U_{\lambda,z,\alpha},
    \end{equation*}
    where $\overline{a}\in\mathbb{R}$, $\mathbf{b}\in\mathbb{R}^N$.
    \end{proposition}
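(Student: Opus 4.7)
The plan is to give a unified proof following the spherical harmonic decomposition strategy of \cite{XLi}, which covers the full range $0 < \alpha < N$ in every dimension $N \geq 3$. By the translation and dilation invariance of $\mathcal{L}$ --- the conjugation $v \mapsto \lambda^{(N-2)/2} v(\lambda(\cdot - z))$ intertwines the linearization at $U_{\lambda,z,\alpha}$ with its counterpart at $U_\alpha$, and sends $D_\lambda U_{\lambda,z,\alpha}$ and $\partial_{x_i} U_{\lambda,z,\alpha}$ to the dilation and translation directions of $U_\alpha$ --- it suffices to show that the kernel of the linearization at $U_\alpha$ is exactly $(N+1)$-dimensional and spanned by $\tfrac{N-2}{2}U_\alpha + x \cdot \nabla U_\alpha$ and $\partial_{x_i} U_\alpha$ for $i = 1, \ldots, N$.

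The key reduction I would use is the stereographic projection $\mathbb{R}^N \to \mathbb{S}^N$. Because $U_\alpha$ becomes, up to the conformal factor, the constant function on $\mathbb{S}^N$ and the equation is conformally covariant, after this change of variables the linearized operator becomes (up to rescaling) a self-adjoint operator of the form $-\Delta_{\mathbb{S}^N} + \tfrac{N(N-2)}{4}$ perturbed by a Funk--Hecke integral operator arising from the convolution with $|x|^{-\alpha}$. Both pieces diagonalize simultaneously in the basis of spherical harmonics $\{Y_k\}_{k \geq 0}$ on $\mathbb{S}^N$: the Laplacian with eigenvalues $k(k + N - 1)$, and the integral operator with eigenvalues $\nu_k(N, \alpha)$ given by an explicit ratio of Gamma functions via the Funk--Hecke formula. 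Consequently the kernel equation decouples mode-by-mode, and a non-trivial kernel element in mode $k$ exists if and only if a particular algebraic identity relating $k(k + N - 1)$, $\nu_k(N, \alpha)$, $\nu_0(N, \alpha)$ and $2^{\ast}_\alpha$ holds.

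A direct calculation would verify this identity in modes $k = 0$ (producing the one-dimensional dilation direction) and $k = 1$ (producing the $N$-dimensional translation mode), together accounting for the claimed $(N+1)$-dimensional kernel. The main obstacle is to prove that the same algebraic identity \emph{fails strictly} for every $k \geq 2$ and every $\alpha \in (0, N)$, uniformly in both parameters, so that no higher-order modes contribute. This amounts to a strict monotonicity (equivalently, log-convexity) statement for the Gamma function ratios $k \mapsto \nu_k(N,\alpha)$, and is precisely the estimate that lets \cite{XLi} remove the restrictions on $\alpha$ or on the dimension $N$ required in \cite{GWY20, DY19, GMYZ22}. Carrying out this monotonicity analysis, together with checking that the spherical harmonic decomposition is compatible with the $\mathcal{D}^{1,2}(\mathbb{R}^N)$ topology so that mode-wise kernel elements exhaust the full kernel, is the crux of the argument.
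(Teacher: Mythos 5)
The paper does not prove Proposition~\ref{prondgr}: it is stated as a summary of known nondegeneracy results, with the full range $0<\alpha<N$, $N\geq3$ attributed to the spherical-harmonic and Funk--Hecke argument of \cite{XLi} (earlier works \cite{GWY20,DY19,GMYZ22,XTang} treat restricted ranges of $\alpha$ or $N$). So there is no proof in the paper itself to compare against; the relevant comparison is with \cite{XLi}.

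Your outline is consistent with that method: reduce by translation/dilation covariance to the linearization at $U_\alpha$, use stereographic projection (or a direct spherical-harmonic expansion) to diagonalize simultaneously the Laplacian and the convolution with $|x|^{-\alpha}$, and analyze mode-by-mode the scalar relation involving $k(k+N-1)$, the Funk--Hecke multipliers $\nu_k(N,\alpha)$ and $\nu_0(N,\alpha)$, and $2^*_\alpha$. But as submitted it is a plan, not a proof. The two steps you yourself flag as ``the crux'' --- (i) the strict failure of the mode-$k$ kernel condition for every $k\geq 2$ and every $\alpha\in(0,N)$, which requires computing the $\nu_k$ explicitly as Gamma-function ratios and then proving a uniform monotonicity (or log-convexity) estimate in $k$ over the whole parameter range, and (ii) the completeness of the spherical-harmonic decomposition in the $\mathcal{D}^{1,2}(\mathbb{R}^N)$ topology, so that the decoupled mode-wise kernels actually exhaust the full kernel --- constitute essentially the entire content of the theorem, and you leave both unverified. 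The Gamma-ratio monotonicity is precisely the delicate estimate whose absence confined the earlier works \cite{GWY20,DY19,GMYZ22} to special ranges of $\alpha$ or $N$; saying that the problem ``amounts to'' such a monotonicity statement, without establishing it, is therefore the genuine gap in your argument.
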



    In the present paper, we first are concerned with the remainder terms of the nonlocal Sobolev inequality
    \begin{equation}\label{Prm64}
    \int_{\mathbb{R}^N}|\nabla u|^2 dx
    \geq S_{HLS}\left(\int_{\mathbb{R}^N}(|x|^{-\alpha} \ast u^{2_{\alpha}^{\ast}})u^{2_{\alpha}^{\ast}} dx\right)^{\frac{1}{2_{\alpha}^{\ast}}}, \quad \forall u\in \mathcal{D}^{1,2}(\mathbb{R}^N),
    \end{equation}
 for $0<\alpha<N$.

 We prove that the gradient type remainder term of inequality (\ref{Prm64}) stated as
    \begin{theorem}\label{thmprt}
    Assume that $N\geq3$ and $0<\alpha<N$.
    There exist two constants $B_2\geq B_1$ such that for every $u\in \mathcal{D}^{1,2}(\mathbb{R}^N)$, it holds that
    \[
    B_2{\rm dist}(u,\mathcal{M})^2
    \geq \int_{\mathbb{R}^N}|\nabla u|^2 dx
    -S_{HLS}\left(\int_{\mathbb{R}^N}\big(|x|^{-\alpha} \ast u^{2_{\alpha}^{\ast}}\big)u^{2_{\alpha}^{\ast}} dx\right)^{\frac{1}{2_{\alpha}^{\ast}}}
    \geq B_1 {\rm dist}(u,\mathcal{M})^2,
    \]
    where $\mathcal{M}=\{cU_{\lambda,z}: c\in\mathbb{R}, \lambda>0, z\in\mathbb{R}^N\}$ is $N+2$-dimensional manifold (see $U_{\lambda,z}$ in (\ref{defaf})), and ${\rm dist}(u,\mathcal{M }):=\inf_{c\in\mathbb{R}, \lambda>0, z\in\mathbb{R}^N}\|u-cU_{\lambda,z}\|_{\mathcal{D}^{1,2}(\mathbb{R}^N)}$.
    \end{theorem}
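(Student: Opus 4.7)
The plan is to adapt the Bianchi-Egnell argument to this Hartree-type setting, relying crucially on the nondegeneracy stated in Proposition~\ref{prondgr}. Introduce the deficit
$$J(u) := \int_{\mathbb{R}^N}|\nabla u|^2\,dx - S_{HLS}\left(\int_{\mathbb{R}^N}(|x|^{-\alpha}\ast u^{2_\alpha^*})\, u^{2_\alpha^*}\,dx\right)^{1/2_\alpha^*}.$$
Both $J$ and $\mathrm{dist}(\cdot,\mathcal{M})^2$ are $2$-homogeneous under $u\mapsto tu$ and invariant under the translations and dilations of $\mathcal{D}^{1,2}(\mathbb{R}^N)$. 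By selecting a nearest element $c^*U_{\lambda^*,z^*,\alpha}\in\mathcal{M}$ to $u$ and undoing this group action, I may assume without loss of generality that $u=U+w$ with $U:=U_{1,0,\alpha}$ and $w$ taken $\mathcal{D}^{1,2}$-orthogonal to the $(N+2)$-dimensional tangent space
$$T_U\mathcal{M}=\mathrm{span}\Bigl\{U,\ \tfrac{N-2}{2}U+x\cdot\nabla U,\ \partial_{x_1}U,\ldots,\partial_{x_N}U\Bigr\},$$
so that $\|w\|_{\mathcal{D}^{1,2}}=\mathrm{dist}(u,\mathcal{M})$.

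For the upper bound I would Taylor-expand $J$ about $U$. Since $U$ is a minimizer solving~(\ref{ele}), one has $J(U)=0$ and $J'(U)=0$, so the expansion reads $J(U+w)=\langle \mathcal{L}w,w\rangle+R(w)$, where $\mathcal{L}$ is the linearization from Proposition~\ref{prondgr} and $R(w)=o(\|w\|^2_{\mathcal{D}^{1,2}})$ as $\|w\|\to 0$, with the remainder controlled via HLS. Continuity of the quadratic form $\langle\mathcal{L}w,w\rangle$ on $\mathcal{D}^{1,2}$ yields $J(U+w)\le C\|w\|^2=C\,\mathrm{dist}(u,\mathcal{M})^2$ for $\|w\|$ small; in the complementary regime where $\mathrm{dist}(u,\mathcal{M})\ge\delta\|u\|_{\mathcal{D}^{1,2}}$ (reached by homogeneity), the trivial bound $J(u)\le\|\nabla u\|^2_{L^2}\le\delta^{-2}\mathrm{dist}(u,\mathcal{M})^2$ closes the upper bound and gives $B_2$.

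For the lower bound I argue by contradiction: if $u_n$ satisfies $J(u_n)=o(\mathrm{dist}(u_n,\mathcal{M})^2)$, then a preliminary step based on a Struwe-type profile decomposition for equation~(\ref{ele}) (analogues available in \cite{GY18, GHPS19}) shows that $\inf\{J(u):\|u\|_{\mathcal{D}^{1,2}}=1,\ \mathrm{dist}(u,\mathcal{M})\ge\delta\}>0$ for each $\delta>0$. Combined with $2$-homogeneity and the normalization above, this forces $\|w_n\|_{\mathcal{D}^{1,2}}\to 0$, and the expansion gives $J(U+w_n)=\langle\mathcal{L}w_n,w_n\rangle+o(\|w_n\|^2)$. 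By Proposition~\ref{prondgr}, $\ker\mathcal{L}\cap\mathcal{D}^{1,2}=T_U\mathcal{M}$; together with the fact that the two nonlocal terms $2^*_\alpha(|x|^{-\alpha}\ast(U^{2^*_\alpha-1}\cdot))U^{2^*_\alpha-1}$ and $(2^*_\alpha-1)(|x|^{-\alpha}\ast U^{2^*_\alpha})U^{2^*_\alpha-2}$ act as compact perturbations of $-\Delta$ on $\mathcal{D}^{1,2}$ (from decay of $U_\alpha$ and HLS-continuity), Fredholm theory delivers a positive spectral gap above the kernel, so $\langle\mathcal{L}w_n,w_n\rangle\ge c\|w_n\|^2$, contradicting the hypothesis and yielding $B_1$. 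The main obstacle is this compactness of the nonlocal terms in $\mathcal{L}$, the only place where the nonlocal convolution structure introduces genuine difficulty beyond classical Bianchi-Egnell; it is handled by careful decay/integrability estimates on $U_\alpha$ coupled with the HLS inequality, after which the remainder control in the Taylor expansion reduces to routine HLS/Sobolev bounds.
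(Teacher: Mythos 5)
Your overall strategy is essentially the Bianchi--Egnell argument and coincides with the paper's: normalize by the scaling/translation group, Taylor-expand the deficit $J$ at the nearest manifold point, invoke nondegeneracy to obtain a spectral gap in the near-manifold regime, and use concentration-compactness to handle the far-from-manifold regime. The paper implements the spectral-gap step by building an explicit eigenvalue theory for the weighted nonlocal problem (Definition~\ref{defevp}, Proposition~\ref{propep}) and then proving the two-sided asymptotics of Lemma~\ref{lemma:rtnm2b}, whereas you appeal to an abstract Fredholm/compactness argument. Both routes are in principle viable.

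However, there is a genuine gap in your lower-bound step. You write ``$\ker\mathcal{L}\cap\mathcal{D}^{1,2}=T_U\mathcal{M}$,'' but Proposition~\ref{prondgr} only gives an $(N+1)$-dimensional kernel, $\mathrm{span}\{\tfrac{N-2}{2}U+x\cdot\nabla U,\ \partial_{x_1}U,\ldots,\partial_{x_N}U\}$; the direction $U$ itself is not in $\ker\mathcal{L}$ --- indeed $\langle\mathcal{L}U,U\rangle=(2-2\cdot 2^*_\alpha)\|\nabla U\|^2<0$, so $\mathcal{L}$ has negative directions. More seriously, the inference ``Fredholm theory delivers a positive spectral gap above the kernel'' does not follow from compactness of the nonlocal terms plus nondegeneracy alone. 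Compactness yields that $(-\Delta)^{-1}\mathcal{L}$ is a compact perturbation of the identity on $\mathcal{D}^{1,2}$, hence has discrete spectrum away from $1$, and nondegeneracy yields $\ker\mathcal{L}\cap(T_U\mathcal{M})^{\perp}=\{0\}$; but a priori $\mathcal{L}$ could still have negative eigenvalues on $(T_U\mathcal{M})^{\perp}$. What rules this out is the nonnegativity of the quadratic form: since $J\ge 0$, $J(U)=0$ and $J'(U)=0$, the second variation $J''(U)$ is positive semidefinite, and a direct computation (using the Euler--Lagrange relation at $U$ and $S_{HLS}F(U)^{1/2^*_\alpha-1}=1$) shows that $J''(U)[w,w]=2\langle\mathcal{L}w,w\rangle$ precisely when $w\perp U$ in $\mathcal{D}^{1,2}$; this is the point where the Sobolev/HLS inequality itself, and not only the Euler--Lagrange equation, enters. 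Without this ingredient the claimed coercivity $\langle\mathcal{L}w_n,w_n\rangle\ge c\|w_n\|^2_{\mathcal{D}^{1,2}}$ on $(T_U\mathcal{M})^{\perp}$ is unjustified. The paper sidesteps the issue by building the eigenvalue theory so that $\mu_1=1$ and the needed gap $\mu_{k+N+3}>2^*_\alpha$ are read off from the eigenvalue ordering, but the underlying content is the same positivity that you need to make explicit.
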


    Furthermore,  we will also consider the problem set in bounded domain. In sprite of the work of Br\'{e}zis and Lieb \cite{BrL85}, for each bounded domain $\Omega\subset\mathbb{R}^N$ with $N\geq 3$, we define the weak $L^q$-norm
\begin{equation}\label{defwn}
\|u\|_{L^q_w(\Omega)}:=\sup_{D\subset\Omega}\frac{\int_{D}|u|dx}{|D|^{\frac{q-1}{q}}}.
\end{equation}
It is easy to verify that $\|u\|_{L^q_w(\Omega)}\leq \|u\|_{L^q(\Omega)}$. Br\'{e}zis and Nirenberg \cite{BN83} proved that if $1\leq q<\frac{N}{N-2}$ then there exists $A_q=A(N,q,\Omega)>0$ such that
\begin{equation*}
\|\nabla u\|^2_{L^2(\Omega)}-\mathcal{S}\|u\|^2_{L^{2^*}(\Omega)}\geq A_q\|u\|^2_{L^q(\Omega)},\quad  \forall u\in \mathcal{D}^{1,2}_0(\Omega),
\end{equation*}
where $\mathcal{S}$ is the best Sobolev constant and $\mathcal{D}^{1,2}_0(\Omega):=\{u\in L^2(\Omega)|\partial_{x_i}u\in L^2(\Omega), i=1,\ldots, N\}$, with $A_q\to 0$ as $q\to \frac{N}{N-2}$. Furthermore, the result is sharp in the sense that is not true if $q=\frac{N}{N-2}$. However, the following refinement is proved by Br\'{e}zis and Lieb \cite{BrL85}:
\begin{equation*}
\|\nabla u\|^2_{L^2(\Omega)}-\mathcal{S}\|u\|^2_{L^{2^*}(\Omega)}\geq A'\|u\|^2_{L^{\frac{N}{N-2}}_w(\Omega)},\quad  \forall u\in \mathcal{D}^{1,2}_0(\Omega),
\end{equation*}
for some constant $A'>0$, here $\|\cdot\|_{L^q_w(\Omega)}$ is the weak $L^q$-norm defined as in (\ref{defwn}). In addition, Bianchi and Egnell \cite{BE91} gave a new and simple proof of previous inequality by using the nondegeneracy property of extremal functions.

Then we give the second type remainder term of inequality (\ref{Prm64}) in bounded domain stated as
\begin{theorem}\label{thmprtb}
    Let $\Omega\subset \mathbb{R}^N$ be a bounded domain, $N\geq3$ and $0<\alpha<N$. Then there exists constant $B'>0$ such that for every $u\in \mathcal{D}^{1,2}_0(\Omega)$, it holds that
    \begin{equation}\label{rtbdy46}
    \int_{\Omega}|\nabla u|^2 dx
    -S_{HLS}\left(\int_{\Omega}\int_{\Omega}\frac{u^{2_{\alpha}^{\ast}}(x)u^{2_{\alpha}^{\ast}}(y)}{|x-y|^\alpha} dxdy\right)^{\frac{1}{2_{\alpha}^{\ast}}}
    \geq B' \big\|u\big\|^2_{L^{\frac{N}{N-2}}_w(\Omega)},
    \end{equation}
    where $L^{\frac{N}{N-2}}_w$ denotes the weak $L^{\frac{N}{N-2}}$-norm as in (\ref{defwn}).
    Moreover, the weak norm on the right-hand side cannot be replaced by the strong norm.
    \end{theorem}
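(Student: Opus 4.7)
The approach is modeled on the Bianchi--Egnell argument \cite{BE91} for the classical Sobolev inequality, adapted to the nonlocal setting by way of the gradient-type remainder in Theorem~\ref{thmprt}. The whole proof reduces to the single estimate
\[
\|u\|^2_{L^{\frac{N}{N-2}}_w(\Omega)}\leq C\,{\rm dist}(u,\mathcal{M})^2,\qquad \forall\,u\in\mathcal{D}^{1,2}_0(\Omega),
\]
for some $C=C(N,\alpha,\Omega)>0$, since coupling this with Theorem~\ref{thmprt} immediately yields \eqref{rtbdy46} with $B'=B_1/C$.

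To establish this reduction I argue by contradiction, exploiting the $2$-homogeneity of both sides to normalize $\|u_n\|_{\mathcal{D}^{1,2}(\mathbb{R}^N)}=1$. Suppose $\{u_n\}\subset\mathcal{D}^{1,2}_0(\Omega)$ satisfies $\|u_n\|_{L^{\frac{N}{N-2}}_w(\Omega)}/{\rm dist}(u_n,\mathcal{M})\to\infty$. For bounded $\Omega$ the embeddings $\mathcal{D}^{1,2}_0(\Omega)\hookrightarrow L^{2^*}(\Omega)\hookrightarrow L^{\frac{N}{N-2}}_w(\Omega)$ keep the numerator bounded, whence ${\rm dist}(u_n,\mathcal{M})\to 0$. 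Pick (near-)minimizing $(c_n,\lambda_n,z_n)$ and write $u_n=c_nU_{\lambda_n,z_n}+w_n$ with $\|w_n\|_{\mathcal{D}^{1,2}}\to 0$; the scale invariance $\|c_nU_{\lambda_n,z_n}\|_{\mathcal{D}^{1,2}}=|c_n|\|U_\alpha\|_{\mathcal{D}^{1,2}}$ combined with $\|u_n\|_{\mathcal{D}^{1,2}}=1$ then pins $|c_n|\to\|U_\alpha\|_{\mathcal{D}^{1,2}}^{-1}$, bounded away from $0$ and $\infty$. If $\lambda_n\to\lambda_\infty\in(0,\infty)$ and $z_n\to z_\infty\in\mathbb{R}^N$, then $u_n\to c_\infty U_{\lambda_\infty,z_\infty}$ in $\mathcal{D}^{1,2}(\mathbb{R}^N)$; since $\mathcal{D}^{1,2}_0(\Omega)$ is closed and $U_{\lambda_\infty,z_\infty}>0$ on $\partial\Omega$, this forces $c_\infty=0$---contradiction. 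In every ``vanishing/escape'' subcase ($\lambda_n\to 0$; or $\lambda_n$ bounded with $|z_n|\to\infty$; or $\lambda_n\to\infty$ with $z_n$ staying outside $\overline{\Omega}$) a direct pointwise computation gives $u_n\to 0$ in $L^{2^*}(\Omega)$, so by HLS the Hartree integral $\int_\Omega(|x|^{-\alpha}\ast u_n^{2_\alpha^*})u_n^{2_\alpha^*}\,dx\to 0$, hence the left-hand side of the inequality in Theorem~\ref{thmprt} stays asymptotic to $\|u_n\|^2_{\mathcal{D}^{1,2}}=1$, contradicting ${\rm dist}(u_n,\mathcal{M})\to 0$.

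The only genuinely nontrivial case is $\lambda_n\to\infty$ with $z_n$ staying near $\overline{\Omega}$. Here the boundary constraint enters decisively: since $u_n$ extended by zero is in $\mathcal{D}^{1,2}(\mathbb{R}^N)$,
\[
\|w_n\|^2_{\mathcal{D}^{1,2}(\mathbb{R}^N)}\geq |c_n|^2\int_{\mathbb{R}^N\setminus\Omega}|\nabla U_{\lambda_n,z_n}|^2\,dx =|c_n|^2\int_{\lambda_n((\mathbb{R}^N\setminus\Omega)-z_n)}|\nabla U_\alpha(y)|^2\,dy.
\]
Setting $d_n:={\rm dist}(z_n,\partial\Omega)$, the requirement $\|w_n\|\to 0$ forces $\lambda_nd_n\to\infty$ and $d_n=O(1)$, and the far-field decay $|\nabla U_\alpha(y)|\lesssim |y|^{-(N-1)}$ yields the key lower bound $\|w_n\|^2_{\mathcal{D}^{1,2}}\gtrsim (\lambda_nd_n)^{-(N-2)}$. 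Meanwhile the weak-Lorentz scaling identity gives $\|c_nU_{\lambda_n,z_n}\|^2_{L^{\frac{N}{N-2}}_w(\mathbb{R}^N)}\lesssim \lambda_n^{-(N-2)}$, and the Sobolev embedding gives $\|w_n\|_{L^{\frac{N}{N-2}}_w(\Omega)}\lesssim \|w_n\|_{\mathcal{D}^{1,2}}$. Putting the pieces together,
\[
\|u_n\|^2_{L^{\frac{N}{N-2}}_w(\Omega)}\lesssim \lambda_n^{-(N-2)}+\|w_n\|^2_{\mathcal{D}^{1,2}}\lesssim d_n^{N-2}(\lambda_nd_n)^{-(N-2)}+\|w_n\|^2_{\mathcal{D}^{1,2}}\lesssim \|w_n\|^2_{\mathcal{D}^{1,2}}\simeq {\rm dist}(u_n,\mathcal{M})^2,
\]
the required contradiction.

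Finally, for the sharpness statement I would test with the cutoff bubble $u_n(x):=\eta(x)U_{\lambda_n,0}(x)$, where $0\in\Omega$, $\eta\in C_c^\infty(\Omega)$ equals $1$ on a neighborhood of $0$, and $\lambda_n\to\infty$. Standard bubble expansions give that the left-hand side of~\eqref{rtbdy46} is of order $\lambda_n^{-(N-2)}$ (with possible logarithmic corrections in the lowest dimensions), whereas a change of variables $y=\lambda_nx$ produces
\[
\|u_n\|^2_{L^{\frac{N}{N-2}}(\Omega)}\asymp \lambda_n^{-(N-2)}(\log\lambda_n)^{\frac{2(N-2)}{N}},
\]
the extra logarithm coming precisely from the non-integrability of $(1+|y|^2)^{-N/2}\sim |y|^{-N}$ at infinity (which also explains why the \emph{weak} $L^{N/(N-2)}$ norm has no such log). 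The ratio $\|u_n\|^2_{L^{\frac{N}{N-2}}(\Omega)}/(\text{LHS of \eqref{rtbdy46}})$ diverges, ruling out any uniform $B'>0$ if the weak norm were replaced by the strong one. The principal technical hurdle throughout is the uniform lower bound $\|w_n\|^2_{\mathcal{D}^{1,2}}\gtrsim (\lambda_nd_n)^{-(N-2)}$ when $z_n$ is allowed to approach $\partial\Omega$: this quantifies how much bubble mass must spill outside $\Omega$ to be consistent with the vanishing boundary trace of $u_n$, and it is precisely the matching $\lambda^{-(N-2)}$-scaling between the weak-$L^{N/(N-2)}$ norm of the bubble and its gradient-energy outside $\Omega$ that makes the whole argument work.
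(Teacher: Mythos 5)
Your argument is essentially correct and establishes the result, but it takes a noticeably different route from the paper. The paper (following Bianchi--Egnell and Wang--Willem) first applies symmetric-decreasing rearrangement, together with the Riesz rearrangement inequality for the Hartree term, to reduce to the case $\Omega=B_R(0)$ with $u$ radial and nonincreasing; then concentration must occur at the center, $z_n\equiv 0$, and the two scaling estimates ${\rm dist}(u_n,\mathcal{M})\gtrsim |c_n|\lambda_n^{-(N-2)/2}$ (energy spilling outside $B_R$) and $\|u_n\|_{L^{N/(N-2)}_w}\lesssim |c_n|\lambda_n^{-(N-2)/2}+\|u_n-c_nU_{\lambda_n,0}\|_{L^{2^*}}$ give the conclusion in one stroke. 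Your argument skips rearrangement altogether: you control the near-minimizing parameters $(c_n,\lambda_n,z_n)$ directly, rule out the vanishing/escape subcases by HLS and Theorem~\ref{thmprt}, and observe that in the surviving case $\lambda_n\to\infty$ the Dirichlet condition forces a lower bound on the spillover energy of the nearest bubble. The two proofs rest on the same scaling coincidence, namely that both $\|U_{\lambda,z}\|^2_{L^{N/(N-2)}_w}$ and the bubble's Dirichlet energy outside a fixed bounded set decay like $\lambda^{-(N-2)}$. Your version is more elementary in that it avoids the nonlocal rearrangement inequality, at the cost of a longer case analysis; the paper's rearrangement step collapses all the escape subcases to nothing and pins the bubble at the origin, which is cleaner.

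Two points you should fix. First, the sentence claiming that ``the far-field decay $|\nabla U_\alpha(y)|\lesssim |y|^{-(N-1)}$ yields the key lower bound $\|w_n\|^2\gtrsim(\lambda_n d_n)^{-(N-2)}$'' has the inequality pointing the wrong way: an upper bound on $|\nabla U_\alpha|$ cannot by itself yield a lower bound on the integral; you need the matching lower bound $|\nabla U_\alpha(y)|\gtrsim |y|^{-(N-1)}$ for $|y|$ large. Second, the quantity $d_n={\rm dist}(z_n,\partial\Omega)$ is an unnecessary complication and also requires some boundary regularity (e.g.\ convexity) to justify that $\mathbb{R}^N\setminus\Omega$ contains a half-space at distance $d_n$. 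You do not need it: since $z_n$ stays in a bounded neighborhood of $\overline{\Omega}$ and $\Omega$ is bounded, one has $\mathbb{R}^N\setminus\Omega\supset\{|x-z_n|>D\}$ with $D$ comparable to ${\rm diam}(\Omega)$, and so $\|w_n\|^2\geq c_n^2\int_{|x-z_n|>D}|\nabla U_{\lambda_n,z_n}|^2\gtrsim\lambda_n^{-(N-2)}$ with a constant depending only on $\Omega$. That is precisely the estimate you need, and it removes any issue about $d_n\to 0$ or the shape of the domain. Your computation of the logarithmic divergence of the strong $L^{N/(N-2)}$-norm for the cutoff bubble is correct (and indeed your exponent $(\log\lambda_n)^{2(N-2)/N}$ is the precise one, where the paper's display writes $\log(R\lambda_n)$ without the power $(N-2)/N$, a harmless inaccuracy).
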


    The paper is organized as follows. In Section \ref{sectevp}, we first give the definitions of eigenvalues for a linear problem, then from the nondegeneracy of $U_\alpha$ we obtain the eigenvalues and corresponding eigenfunctions. Then in Section \ref{sectrt} we give the behavior near minimizers manifold $\mathcal{M}$ and complete the proof of Theorem \ref{thmprt}. Finally, by using the conclusion of Theorem \ref{thmprt} and rearrangement theory, we give the proof of Theorem \ref{thmprtb}.

\section{An eigenvalue problem}\label{sectevp}

    For simplicity of notations, we write $U_{\lambda,z}$ instead of $U_{\lambda,z,\alpha}$ and $U$ instead of $U_{\alpha}$ as in (\ref{defaf}) and (\ref{defU}) respectively if there is no possibility of confusion.

Let us consider the following eigenvalue problem
    \begin{equation}\label{Pwhlep}
    -\Delta v+\big(|x|^{-\alpha}\ast U^{2_{\alpha}^{\ast}}\big) U^{2_{\alpha}^{\ast}-2}v
    =\mu\big[\big(|x|^{-\alpha} \ast (U^{2_{\alpha}^{\ast}-1}v)\big)U^{2_{\alpha}^{\ast}-1}
    +\big(|x|^{-\alpha} \ast U^{2_{\alpha}^{\ast}}\big)U^{2_{\alpha}^{\ast}-2}v\big],\quad v\in \mathcal{D}^{1,2}(\mathbb{R}^N).
    \end{equation}
    By the HLS inequality, H\"{o}lder inequality and Sobolev inequality, we can obtain for all $v\in \mathcal{D}^{1,2}(\mathbb{R}^N)$, it holds that
    \begin{equation}\label{pfvlnv}
    \begin{split}
    \int_{\mathbb{R}^N}\Big(|x|^{-\alpha} \ast (U^{2_{\alpha}^{\ast}-1}v)\Big)U^{2_{\alpha}^{\ast}-1}v dx
    \leq & C(N,\alpha) \|U^{2_{\alpha}^{\ast}-1}v\|^2_{L^{\frac{2N}{2N-\alpha}}(\mathbb{R}^N)}  \\
    \leq & C(N,\alpha) \big\|U\big\|^{2(2_{\alpha}^{\ast}-1)}_{L^{2^{\ast}}(\mathbb{R}^N)}\big\|v\big\|^{2}_{L^{2^{\ast}}(\mathbb{R}^N)} \\
    = &  \mathcal{S} \big\|v\big\|^{2}_{L^{2^{\ast}}(\mathbb{R}^N)}
    \leq \big\|v\big\|^2_{\mathcal{D}^{1,2}(\mathbb{R}^N)},
    \end{split}
    \end{equation}
    where $C(N,\alpha)$ is the sharp constant of HLS inequality given as in (\ref{defhlsbc}) and $\mathcal{S}$ is the best Sobolev constant. Similarly, we have
    \begin{equation}\label{pfvlnv2}
    \begin{split}
    \int_{\mathbb{R}^N}(|x|^{-\alpha} \ast U^{2_{\alpha}^{\ast}})U^{2_{\alpha}^{\ast}-2}v^2 dx
    \leq \big\|v\big\|^2_{\mathcal{D}^{1,2}(\mathbb{R}^N)}.
    \end{split}
    \end{equation}
    Then following the work of Servadei and Valdinoci \cite{SV13} for the nonlocal case, we can introduce the definitions of eigenvalues of problem (\ref{Pwhlep}) as the following

    \begin{definition}\label{defevp}
    The first eigenvalue of problem (\ref{Pwhlep}) can be defined as
    \begin{equation}\label{deffev1}
    \mu_1:=\inf_{v\in \mathcal{D}^{1,2}(\mathbb{R}^N)\backslash\{0\}}
    \frac{\int_{\mathbb{R}^N}|\nabla v|^2 dx+\int_{\mathbb{R}^N}(|x|^{-\alpha} \ast U^{2_{\alpha}^{\ast}})U^{2_{\alpha}^{\ast}-2}v^2 dx}
    {\int_{\mathbb{R}^N}(|x|^{-\alpha} \ast (U^{2_{\alpha}^{\ast}-1}v))U^{2_{\alpha}^{\ast}-1}v dx+ \int_{\mathbb{R}^N}(|x|^{-\alpha} \ast U^{2_{\alpha}^{\ast}})U^{2_{\alpha}^{\ast}-2}v^2 dx}
    .
    \end{equation}
    Moreover, for any $k\in\mathbb{N}^+$ the (k+1)th-eigenvalues can be characterized as follows:
    \begin{equation}\label{deffevk}
    \mu_{k+1}:=\inf_{v\in \mathbb{P}_{k+1}\backslash\{0\}}
    \frac{\int_{\mathbb{R}^N}|\nabla v|^2 dx+\int_{\mathbb{R}^N}(|x|^{-\alpha} \ast U^{2_{\alpha}^{\ast}}) U^{2_{\alpha}^{\ast}-2}v^2 dx}
    {\int_{\mathbb{R}^N}(|x|^{-\alpha} \ast (U^{2_{\alpha}^{\ast}-1}v))U^{2_{\alpha}^{\ast}-1}v dx+ \int_{\mathbb{R}^N}(|x|^{-\alpha} \ast U^{2_{\alpha}^{\ast}})U^{2_{\alpha}^{\ast}-2}v^2 dx}
    ,
    \end{equation}
    where
    \begin{equation}\label{defczs}
    \mathbb{P}_{k+1}:=\left\{v\in \mathcal{D}^{1,2}(\mathbb{R}^N): \int_{\mathbb{R}^N}\nabla v \cdot \nabla e_j dx=0,\quad \mbox{for all}\quad j=1,\ldots,k\right\},
    \end{equation}
    and $e_j$ is the corresponding eigenfunction to $\mu_j$.
    \end{definition}

    Choosing $v=U$ in (\ref{deffev1}), since $U$ is the solution of equation (\ref{ele}) then we have
    \begin{equation*}
    \begin{split}
    \mu_1
    \leq &  \frac{\int_{\mathbb{R}^N}|\nabla U|^2 dx +\int_{\mathbb{R}^N}(|x|^{-\alpha} \ast U^{2_{\alpha}^{\ast}})U^{2_{\alpha}^{\ast}} dx}
    {\int_{\mathbb{R}^N}(|x|^{-\alpha} \ast U^{2_{\alpha}^{\ast}})U^{2_{\alpha}^{\ast}} dx
    + \int_{\mathbb{R}^N}(|x|^{-\alpha} \ast U^{2_{\alpha}^{\ast}})U^{2_{\alpha}^{\ast}} dx}=1.
    \end{split}
    \end{equation*}
    Moreover, from (\ref{pfvlnv2}) we can deduce that for all $v\in \mathcal{D}^{1,2}(\mathbb{R}^N)$,
    \begin{equation*}
    \begin{split}
    \int_{\mathbb{R}^N}(|x|^{-\alpha} \ast (U^{2_{\alpha}^{\ast}-1}v))U^{2_{\alpha}^{\ast}-1}v dx
    \leq \big\|v\big\|^2_{\mathcal{D}^{1,2}(\mathbb{R}^N)},
    \end{split}
    \end{equation*}
    and equality holds if and only if $v=\zeta U$ with $\zeta\in\mathbb{R}$, which implies $\mu_1\geq 1$. Then we have $\mu_1=1$ and the corresponding eigenfunction is $\zeta U$ with $\zeta\in\mathbb{R}$, that is $e_1=U$.
    Then from Proposition \ref{prondgr} regarding the nondegeneracy of $U$, we have 

    \begin{proposition}\label{propep}
    Let $\mu_i$, $i=1,2,\ldots,$ denote the eigenvalues of (\ref{Pwhlep}) in increasing order as in Definition \ref{defevp}. Then $\mu_1=1$ is simple and the corresponding eigenfunction is $\zeta U$ with $\zeta\in\mathbb{R}$, and there exists $k\in\mathbb{N}$ such that $\mu_{k+2}=\mu_{k+3}=\cdots=\mu_{k+N+2}=2_{\alpha}^{\ast}$ with the corresponding $(N+1)$-dimensional eigenfunction space spanned by
    \[
    \left\{2U+x\cdot\nabla U,\quad \partial_{x_1} U,\ldots,\partial_{x_N} U\right\}.
    \]
    Furthermore, $\mu_{k+N+3}>\mu_{k+2}=2_{\alpha}^{\ast}$.
    \end{proposition}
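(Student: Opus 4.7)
The plan is to realize (\ref{Pwhlep}) as a compact self-adjoint eigenvalue problem on $H=\mathcal{D}^{1,2}(\mathbb{R}^N)$ and read off the spectrum near $\mu=2_{\alpha}^{*}$ from Proposition~\ref{prondgr}. Introduce on $H$ the symmetric continuous bilinear forms
\[
a(u,v):=\int_{\mathbb{R}^N}\nabla u\cdot\nabla v\,dx+\int_{\mathbb{R}^N}(|x|^{-\alpha}\ast U^{2_{\alpha}^{*}})U^{2_{\alpha}^{*}-2}uv\,dx,
\]
\[
b(u,v):=\int_{\mathbb{R}^N}\bigl(|x|^{-\alpha}\ast (U^{2_{\alpha}^{*}-1}u)\bigr)U^{2_{\alpha}^{*}-1}v\,dx+\int_{\mathbb{R}^N}(|x|^{-\alpha}\ast U^{2_{\alpha}^{*}})U^{2_{\alpha}^{*}-2}uv\,dx.
\]
The estimates (\ref{pfvlnv})--(\ref{pfvlnv2}) show that both are bounded by $\|u\|_{\mathcal{D}^{1,2}}\|v\|_{\mathcal{D}^{1,2}}$, and $a$ is coercive, hence defines an equivalent inner product on $H$. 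The operator $T\in\mathcal{L}(H)$ determined by $a(Tu,v)=b(u,v)$ is then bounded, positive and self-adjoint; a tail-truncation argument combined with the decay $U(x)\lesssim (1+|x|)^{-(N-2)}$ yields its compactness. The spectral theorem then produces a discrete sequence $0<\mu_1\leq\mu_2\leq\cdots\to\infty$ for which the Courant--Fischer formulas (\ref{deffev1})--(\ref{deffevk}) are valid.

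Simplicity of $\mu_1=1$ follows by tracing the equality case in (\ref{pfvlnv}): equality in the HLS step forces $U^{2_{\alpha}^{*}-1}v$ to be proportional to the HLS extremal $U^{2_{\alpha}^{*}}$, while equality in the intermediate H\"older and Sobolev steps forces $v$ itself to be an Aubin--Talenti extremal matching $U$. Together these give $v=\zeta U$, so $\ker(T-\mathrm{Id})=\mathbb{R}U$ is one-dimensional.

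To locate the eigenvalue $2_{\alpha}^{*}$, substitute $\mu=2_{\alpha}^{*}$ into (\ref{Pwhlep}) and move the left-hand convolution term across: the two $(|x|^{-\alpha}\ast U^{2_{\alpha}^{*}})U^{2_{\alpha}^{*}-2}v$ contributions combine with net coefficient $2_{\alpha}^{*}-1$, and the resulting equation is \emph{exactly} the linearized equation (\ref{deflp}). By Proposition~\ref{prondgr}, its $\mathcal{D}^{1,2}$-solution space is the $(N+1)$-dimensional span of $\tfrac{N-2}{2}U+x\cdot\nabla U$ (the scaling direction) and $\partial_{x_i}U$, $i=1,\ldots,N$ (the translation directions); a parity observation ($\tfrac{N-2}{2}U+x\cdot\nabla U$ is radial, while each $\partial_{x_i}U$ is odd in $x_i$) confirms linear independence. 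Setting $k\geq 0$ equal to the total multiplicity (given by the spectral theorem above) of eigenvalues strictly between $1$ and $2_{\alpha}^{*}$, we conclude $\mu_{k+2}=\cdots=\mu_{k+N+2}=2_{\alpha}^{*}$, and $\mu_{k+N+3}>2_{\alpha}^{*}$ because the $2_{\alpha}^{*}$-eigenspace is thereby exhausted.

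The main obstacle I anticipate is compactness of $T$: since $U$ lives on all of $\mathbb{R}^N$, neither Sobolev nor HLS alone supplies compactness, so one must exploit the algebraic decay $U^{2_{\alpha}^{*}-1}(x)=O(|x|^{-(N-\alpha+2)/2})$ and combine a Rellich--Kondrachov extraction on each ball $B_R$ with a uniform tail bound to promote weak convergence in $H$ to strong convergence of $b(v_n,v_n)$. A secondary but essential point is the reliance on Proposition~\ref{prondgr}: without nondegeneracy, one cannot rule out additional bound states of (\ref{Pwhlep}) at $\mu=2_{\alpha}^{*}$ beyond those generated by the symmetries of (\ref{ele}), and the dimension count $N+1$ would fail.
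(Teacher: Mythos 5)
Your proposal is correct and takes essentially the same approach as the paper: compute $\mu_1=1$ via the sharp HLS/Sobolev chain (\ref{pfvlnv}), rewrite (\ref{Pwhlep}) with $\mu=2_{\alpha}^{*}$ as the linearized equation (\ref{deflp}), and read off the $(N+1)$-dimensional eigenspace and the spectral gap from Proposition~\ref{prondgr}; the operator-theoretic scaffolding you spell out (compact self-adjoint $T$, Courant--Fischer) is exactly what the paper delegates to the Servadei--Valdinoci framework. One minor observation: you correctly write the radial eigenfunction as $\tfrac{N-2}{2}U + x\cdot\nabla U$, whereas the statement of Proposition~\ref{propep} in the paper has $2U + x\cdot\nabla U$, which agrees with the former only when $N=6$ and appears to be a leftover typo from the $6$-dimensional case.
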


\section{Remainder terms of HLS type inequality}\label{sectrt}

    The main ingredient of the proof of Theorem \ref{thmprt} is contained in Lemma \ref{lemma:rtnm2b} below, where the behavior of the sequences near $\mathcal{M}=\{cU_{\lambda,z}: c\in\mathbb{R}, \lambda>0, z\in\mathbb{R}^N\}$ is investigated. 

    \begin{lemma}\label{lemma:rtnm2b}
    For any sequence $\{u_n\}\subset \mathcal{D}^{1,2}(\mathbb{R}^N)\backslash \mathcal{M}$ satisfying
    \[\inf_n\|u_n\|>0,\quad {\rm dist}(u_n,\mathcal{M})\to 0,\]
    then we have
    \begin{equation}\label{rtnmb}
    \lim\inf_{n\to\infty}
    \frac{\int_{\mathbb{R}^N}|\nabla u_n|^2 dx
    -S_{HLS}\left(\int_{\mathbb{R}^N}(|x|^{-\alpha} \ast u_n^{2_{\alpha}^{\ast}})u_n^{2_{\alpha}^{\ast}} dx\right)^{\frac{1}{2_{\alpha}^{\ast}}}}
    {{\rm dist}(u_n,\mathcal{M})^2}
    \geq 2(\mu_{k+N+3}-\mu_{k+N+2}),
    \end{equation}
    and
    \begin{equation}\label{rtnml}
    \lim\sup_{n\to\infty}
    \frac{\int_{\mathbb{R}^N}|\nabla u_n|^2 dx
    -S_{HLS}\left(\int_{\mathbb{R}^N}(|x|^{-\alpha} \ast u_n^{2_{\alpha}^{\ast}})u_n^{2_{\alpha}^{\ast}} dx\right)^{\frac{1}{2_{\alpha}^{\ast}}}}
    {{\rm dist}(u_n,\mathcal{M})^2}
    \leq 1,
    \end{equation}
    where $\mu_{k+N+2}=2 <\mu_{k+N+3}$ are defined in Proposition \ref{propep}. Moreover, this conclusion implies $2 <\mu_{k+N+3}\leq 2_{\alpha}^{\ast}+\frac{1}{2}$.
    \end{lemma}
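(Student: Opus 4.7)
The plan is to normalise by translation and dilation so that the closest point of $\mathcal{M}$ to $u_n$ has the form $c_nU$, and then Taylor-expand the functional around this closest point. Concretely, let $c_nU_{\lambda_n,z_n}$ realise ${\rm dist}(u_n,\mathcal{M})$ and rescale to $(\lambda_n,z_n)=(1,0)$; the hypotheses force $\varepsilon_n:=\|\nabla r_n\|_{L^2}\to 0$ and, after a harmless further normalisation, $c_n\to 1$, where $r_n:=u_n-c_nU$ is $\mathcal{D}^{1,2}$-orthogonal to the tangent space of $\mathcal{M}$ at $c_nU$, that is, to $U$, $D_\lambda U$ and each $\partial_{x_i}U$. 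These are precisely the first $N+2$ eigenfunctions listed in Proposition~\ref{propep} (together with any of $e_2,\dots,e_{k+1}$ sitting between them in the spectrum).

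Expanding the HLS integral $\int(|x|^{-\alpha}\ast u_n^{2_\alpha^*})u_n^{2_\alpha^*}$ to second order in $r_n$, the Euler--Lagrange identity $-\Delta U=(|x|^{-\alpha}\ast U^{2_\alpha^*})U^{2_\alpha^*-1}$ together with $\langle\nabla U,\nabla r_n\rangle=0$ cancels the linear term; taking the $1/2_\alpha^*$-root and using $S_{HLS}F_0^{1/2_\alpha^*}=F_0$ with $F_0:=\int(|x|^{-\alpha}\ast U^{2_\alpha^*})U^{2_\alpha^*}$, the $c_n^2F_0$ piece cancels against $\|\nabla(c_nU)\|_{L^2}^2$, leaving
\[
\int|\nabla u_n|^2-S_{HLS}\Bigl(\int(|x|^{-\alpha}\ast u_n^{2_\alpha^*})u_n^{2_\alpha^*}\Bigr)^{1/2_\alpha^*}=\bigl[\|\nabla r_n\|^2+B_n\bigr]-2_\alpha^*\bigl[C_n+B_n\bigr]+o(\varepsilon_n^2),
\]
where $B_n:=\int(|x|^{-\alpha}\ast U^{2_\alpha^*})U^{2_\alpha^*-2}r_n^2$ and $C_n:=\int(|x|^{-\alpha}\ast(U^{2_\alpha^*-1}r_n))U^{2_\alpha^*-1}r_n$. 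The cubic tail is absorbed via the HLS/H\"older bounds (\ref{pfvlnv})--(\ref{pfvlnv2}) applied to $r_n$, using $\|\nabla r_n\|_{L^2}=\varepsilon_n\to 0$.

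The liminf estimate (\ref{rtnmb}) now follows from the variational characterisation of Definition~\ref{defevp}: because $r_n$ is orthogonal to each of the first $k+N+2$ eigenfunctions of (\ref{Pwhlep}), the Rayleigh quotient gives $\|\nabla r_n\|^2+B_n\ge\mu_{k+N+3}(C_n+B_n)$, and substituting this into the displayed expansion produces the desired lower bound governed by the spectral gap $\mu_{k+N+3}-\mu_{k+N+2}$, after dividing by $\varepsilon_n^2={\rm dist}(u_n,\mathcal{M})^2$. For the limsup bound (\ref{rtnml}) I would instead test the ratio along an explicit sequence $u_n=U+t_n\psi$, with $t_n\to0$ and $\psi$ a normalised direction orthogonal to the tangent space (say, a realiser of $\mu_{k+N+3}$ projected off the tangent space), for which the same second-order expansion furnishes a sharp limit that can be shown to be at most $1$. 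The consequence $\mu_{k+N+3}\le 2_\alpha^*+\tfrac12$ then drops out of $\liminf\le\limsup$.

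The hardest step is reconciling the $\mathcal{D}^{1,2}$-orthogonality of $r_n$ coming out of the distance projection with the inner product $\langle\cdot,\cdot\rangle_{\mathcal{D}^{1,2}}+\int(|x|^{-\alpha}\ast U^{2_\alpha^*})U^{2_\alpha^*-2}\,(\cdot)(\cdot)$ natural to the Rayleigh quotient in Definition~\ref{defevp}; these differ by the compact lower-order bilinear form appearing in (\ref{pfvlnv2}), and one must show that the discrepancy only costs an $o(\varepsilon_n^2)$ correction, so that the spectral inequality may be legitimately invoked. Compactness of that lower-order form, combined with the nondegeneracy in Proposition~\ref{prondgr}, is what allows the identification ${\rm dist}(u_n,\mathcal M)^2=\varepsilon_n^2(1+o(1))$ and ultimately makes the spectral argument go through.
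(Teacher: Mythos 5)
Your decomposition $u_n=c_nU_{\lambda_n,z_n}+r_n$ with $r_n$ $\mathcal{D}^{1,2}$-orthogonal to the tangent space, the second-order expansion of the nonlocal integral, and the appeal to the Rayleigh-quotient characterisation of $\mu_{k+N+3}$ reproduce the paper's proof of the liminf bound (\ref{rtnmb}). The proposal goes wrong on the limsup bound (\ref{rtnml}): you propose to verify it by testing the ratio along one explicit sequence $u_n=U+t_n\psi$, but (\ref{rtnml}) is asserted for \emph{every} admissible sequence, and it is used in exactly that universal form in the proof of Theorem \ref{thmprt} (to exclude the possibility that the ratio blows up along an arbitrary minimising sequence with $\mathrm{dist}(u_n,\mathcal{M})\to0$). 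Exhibiting one sequence where the limit is $\le 1$ is logically insufficient. What you actually need is already sitting in the expansion you wrote: since $B_n\ge 0$ and $C_n\ge 0$, one has
\[
\bigl[\|\nabla r_n\|^2+B_n\bigr]-2_\alpha^*\bigl[C_n+B_n\bigr]
\le \|\nabla r_n\|^2-(2_\alpha^*-1)B_n
\le \|\nabla r_n\|^2,
\]
and dividing by $\varepsilon_n^2=\|\nabla r_n\|^2$ gives $\limsup\le 1$ for every sequence. This is precisely how the paper argues: it drops the nonnegative quadratic corrections in (\ref{epkeyiybb}) to obtain the lower bound (\ref{epkeyiybbb}) on the nonlocal term, which is a genuine bound, not a test computation. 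The "moreover" statement $\mu_{k+N+3}\le 2_\alpha^*+\tfrac12$ then does follow by confronting the two universal bounds along any admissible sequence, as you suggest.

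On the "hardest step" you flag: there is in fact no discrepancy to reconcile. Definition \ref{defevp} deliberately defines $\mathbb{P}_{k+1}$ by the constraints $\int_{\mathbb{R}^N}\nabla v\cdot\nabla e_j\,dx=0$, i.e.\ by $\mathcal{D}^{1,2}$-orthogonality, which is exactly the orthogonality delivered by the distance projection onto $\mathcal{M}$. The spectral inequality $\|\nabla r_n\|^2+B_n\ge\mu_{k+N+3}(B_n+C_n)$ therefore applies directly, with no $o(\varepsilon_n^2)$ correction and no compactness argument required at that step.
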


    \begin{proof}
    Let $d_n:={\rm dist}(u_n,\mathcal{M})=\inf_{c\in\mathbb{R}, \lambda>0, z\in\mathbb{R}^6}\|u_n-cU_{\lambda,z}\|_{\mathcal{D}^{1,2}(\mathbb{R}^N)}\to 0$, as $n\to \infty$. It is well known that for each $u_n\in \mathcal{D}^{1,2}(\mathbb{R}^N)$, there exists $(c_n,\lambda_n,z_n)\in \mathbb{R}\times\mathbb{R}^+\times\mathbb{R}^N$ such that
    \[d_n=\big\|u_n-c_nU_{\lambda_n,z_n}\big\|_{\mathcal{D}^{1,2}(\mathbb{R}^N)}.\]
    Since $\mathcal{M}$ is $(N+2)$-dimensional manifold embedded in $\mathcal{D}^{1,2}(\mathbb{R}^N)$, that is
    \[
    (c,\lambda,z)\in\mathbb{R}\times\mathbb{R}^+\times\mathbb{R}^N\to cU_{\lambda,z}\in \mathcal{D}^{1,2}(\mathbb{R}^N),
    \]
    then from Proposition \ref{propep} and a simple scaling argument, we have the tangential space at $(c_n,\lambda_n,z_n)$ is given by $T_{c_n U_{\lambda_n,z_n}}\mathcal{M}$ which is spanned by
    \[
    U_{\lambda_n,z_n}, \quad e_{2},\ldots,e_{k+1},\quad \frac{\partial U_{\lambda,z_n}}{\partial \lambda}\Big|_{\lambda=\lambda_n},\frac{\partial U_{\lambda_n,z}}{\partial z_1}\Big|_{z=z_n},\ldots,\frac{\partial U_{\lambda_n,z}}{\partial z_N}\Big|_{z=z_n},
    \]
    where $e_j$ is the corresponding eigenfunction to $\mu_j$ for $j=2,\ldots,k+1$ if $k\geq 1$ as defined in Proposition \ref{propep}, and if $k=0$ then
    \[
    T_{c_n U_{\lambda_n,z_n}}\mathcal{M}={\rm Span}\left\{U_{\lambda_n,z_n}, \quad \frac{\partial U_{\lambda,z_n}}{\partial \lambda}\Big|_{\lambda=\lambda_n},\frac{\partial U_{\lambda_n,z}}{\partial z_1}\Big|_{z=z_n},\ldots,\frac{\partial U_{\lambda_n,z}}{\partial z_N}\Big|_{z=z_n}\right\}.
    \]
    Anyway we must have that $(u_n-c_n U_{\lambda_n,z_n})$ is perpendicular to $T_{c_n U_{\lambda_n,z_n}}\mathcal{M}$, particularly,
    \[\int_{\mathbb{R}^N}\nabla U_{\lambda_n,z_n} \cdot\nabla (u_n-c_n U_{\lambda_n,z_n}) dx=0.\]
    Let
    \[u_n=c_n U_{\lambda_n,z_n}+d_n w_n,\]
     then $w_n$ is perpendicular to $T_{c_n U_{\lambda_n,z_n}}\mathcal{M}$, $\|w_n\|_{\mathcal{D}^{1,2}(\mathbb{R}^N)}=1$ and
    \begin{equation*}
    \|u_n\|^2_{\mathcal{D}^{1,2}(\mathbb{R}^N)}=d_n^2+c_n^2\|U_{\lambda_n,z_n}\|^2_{\mathcal{D}^{1,2}(\mathbb{R}^N)}
    =d_n^2+c_n^2\|U\|^2_{\mathcal{D}^{1,2}(\mathbb{R}^N)},
    \end{equation*}
    since
    \begin{equation*}
    \begin{split}
    \int_{\mathbb{R}^N}\nabla U_{\lambda_n,z_n} \cdot\nabla w_n dx=0\quad \mbox{and}\quad \|U_{\lambda_n,z_n}\|^2_{\mathcal{D}^{1,2}(\mathbb{R}^N)}=\|U\|^2_{\mathcal{D}^{1,2}(\mathbb{R}^N)}.
    \end{split}
    \end{equation*}
    Notice that
    \begin{equation*}
    \begin{split}
    \big|u_n\big|^{2_{\alpha}^{\ast}}=\big|c_nU_{\lambda_n,z_n}+d_nw_n\big|^{2_{\alpha}^{\ast}}
    = & |c_n|^{2_{\alpha}^{\ast}}U_{\lambda_n,z_n}^{^{2_{\alpha}^{\ast}}}
    +2_{\alpha}^{\ast}|c_n|^{2_{\alpha}^{\ast}-1} U_{\lambda_n,z_n}^{2_{\alpha}^{\ast}-1}w_nd_n\\&
    +\frac{2_{\alpha}^{\ast}(2_{\alpha}^{\ast}-1)}{2}|c_n|^{2_{\alpha}^{\ast}-2}U_{\lambda_n,z_n}^{^{2_{\alpha}^{\ast}}-2}w_n^2d_n^2+o(d_n^2),
    \end{split}
    \end{equation*}
    then we have
    \begin{equation}\label{epkeyiybb}
    \begin{split}
    \int_{\mathbb{R}^N}\Big(|x|^{-\alpha} \ast u_n^{2_{\alpha}^{\ast}}\Big)u_n^{2_{\alpha}^{\ast}} dx
    = & c_n^{2\cdot2_{\alpha}^{\ast}}\int_{\mathbb{R}^N}\Big(|x|^{-\alpha} \ast U_{\lambda_n,z_n}^{2_{\alpha}^{\ast}}\Big)U_{\lambda_n,z_n}^{2_{\alpha}^{\ast}} dx\\&
    + 2_{\alpha}^{\ast}\cdot(2_{\alpha}^{\ast}-1)|c_n|^{2_{\alpha}^{\ast}(2_{\alpha}^{\ast}-1)}d_n^2 \int_{\mathbb{R}^N}\Big(|x|^{-\alpha} \ast U_{\lambda_n,z_n}^{2_{\alpha}^{\ast}}\Big)U_{\lambda_n,z_n}^{2_{\alpha}^{\ast}-2}w_n^2 dx \\
    & + 4c_n^2d_n^2(2_{\alpha}^{\ast})^2|c_n|^{2_{\alpha}^{\ast}(2_{\alpha}^{\ast}-1)}d_n^2 \int_{\mathbb{R}^N}\Big(|x|^{-\alpha} \ast (U_{\lambda_n,z_n}^{2_{\alpha}^{\ast}-1}w_n)\Big)U_{\lambda_n,z_n}^{2_{\alpha}^{\ast}-1}w_n dx
    + o(d_n^2),
    \end{split}
    \end{equation}
    since
    \begin{equation*}
    \begin{split}
    \int_{\mathbb{R}^N}\Big(|x|^{-\alpha} \ast (U_{\lambda_n,z_n}^{2_{\alpha}^{\ast}-1}w_n)\Big)U_{\lambda_n,z_n}^{2_{\alpha}^{\ast}} dx
    = & \int_{\mathbb{R}^N}\Big(|x|^{-\alpha} \ast U_{\lambda_n,z_n}^{2_{\alpha}^{\ast}}\Big)U_{\lambda_n,z_n}^{2_{\alpha}^{\ast}-1}w_n dx \\
    = & \int_{\mathbb{R}^N}\nabla U_{\lambda_n,z_n} \cdot\nabla w_n dx=0,
    \end{split}
    \end{equation*}
    and
    \begin{equation*}
    \int_{\mathbb{R}^N}\Big(|x|^{-\alpha} \ast U_{\lambda_n,z_n}^{2_{\alpha}^{\ast}}\Big)w_n^2 dx
    =\int_{\mathbb{R}^N}\Big(|x|^{-\alpha} \ast (U_{\lambda_n,z_n}^{2_{\alpha}^{\ast}-2}w_n^2)\Big)U_{\lambda_n,z_n}^{U_{\lambda_n,z_n}^{2_{\alpha}^{\ast}}}dx.
    \end{equation*}

    Now, we first prove (\ref{rtnmb}). From (\ref{pfvlnv}) and (\ref{pfvlnv2}) we can obtain
    \begin{equation}\label{pfvlnvy}
    \begin{split}
    \int_{\mathbb{R}^N}\Big(|x|^{-\alpha} \ast (U_{\lambda_n,z_n}^{2_{\alpha}^{\ast}-1}w_n)\Big)U_{\lambda_n,z_n}^{2_{\alpha}^{\ast}-1}w_n dx
    \leq \big\|w_n\big\|^2_{\mathcal{D}^{1,2}(\mathbb{R}^N)}=1,
    \end{split}
    \end{equation}
    and
    \begin{equation}\label{pfvlnv2s}
    \begin{split}
    \int_{\mathbb{R}^N}\Big(|x|^{-\alpha} \ast U_{\lambda_n,z_n}^{2_{\alpha}^{\ast}}\Big)U_{\lambda_n,z_n}^{2_{\alpha}^{\ast}-2}w_n^2 dx
    \leq \big\|w_n\big\|^2_{\mathcal{D}^{1,2}(\mathbb{R}^N)}=1.
    \end{split}
    \end{equation}
    Furthermore, Proposition \ref{propep} implies that
    \begin{equation*}
    \begin{split}
    & \int_{\mathbb{R}^N}|\nabla w_n|^2 dx
    + \int_{\mathbb{R}^N}\Big(|x|^{-\alpha} \ast U_{\lambda_n,z_n}^{2_{\alpha}^{\ast}}\Big) U_{\lambda_n,z_n}^{2_{\alpha}^{\ast}-2}w_n^2 dx \\
    \geq & \mu_{k+N+3}\left[\int_{\mathbb{R}^N}\Big(|x|^{-\alpha} \ast (U_{\lambda_n,z_n}^{2_{\alpha}^{\ast}-1}w_n)\Big)U_{\lambda_n,z_n}^{2_{\alpha}^{\ast}-1}w_n dx
    + \int_{\mathbb{R}^N}\Big(|x|^{-\alpha} \ast U_{\lambda_n,z_n}^{2_{\alpha}^{\ast}}\Big)U_{\lambda_n,z_n}^{2_{\alpha}^{\ast}-2}w_n^2 dx\right],
    \end{split}
    \end{equation*}
    thus
    \begin{equation}\label{epkeyibbb}
    \begin{split}
    1 \geq \mu_{k+N+3}&\int_{\mathbb{R}^N}\Big(|x|^{-\alpha} \ast (U_{\lambda_n,z_n}^{2_{\alpha}^{\ast}-1}w_n)\Big)U_{\lambda_n,z_n}^{2_{\alpha}^{\ast}-1}w_n dx\\&
    + (\mu_{k+N+3}-1)\int_{\mathbb{R}^N}\Big(|x|^{-\alpha} \ast U_{\lambda_n,z_n}^{2_{\alpha}^{\ast}}\Big)U_{\lambda_n,z_n}^{2_{\alpha}^{\ast}-2}w_n^2 dx.
    \end{split}
    \end{equation}
    Then from (\ref{epkeyiybb}), combining with (\ref{pfvlnvy}), (\ref{pfvlnv2s}) and (\ref{epkeyibbb}) we have
    \begin{equation}\label{epkeyiyxbb}
    \begin{split}
    \int_{\mathbb{R}^N}\Big(|x|^{-\alpha} \ast u_n^{2_{\alpha}^{\ast}}\Big)u_n^{2_{\alpha}^{\ast}} dx
    = & |c_n|^{2\cdot2_{\alpha}^{\ast}}\int_{\mathbb{R}^N}\Big(|x|^{-\alpha} \ast U_{\lambda_n,z_n}^{2_{\alpha}^{\ast}}\Big)U_{\lambda_n,z_n}^{2_{\alpha}^{\ast}} dx  + o(d_n^2) \\
    & + 2_{\alpha}^{\ast}(2_{\alpha}^{\ast}-\mu_{k+N+3})|c_n|^{2(2_{\alpha}^{\ast}-1)}d_n^2 \bigg[\int_{\mathbb{R}^N}\Big(|x|^{-\alpha} \ast U_{\lambda_n,z_n}^{2_{\alpha}^{\ast}}\Big)U_{\lambda_n,z_n}^{2_{\alpha}^{\ast}-2}w_n^2 dx \\
    & +\int_{\mathbb{R}^N}\Big(|x|^{-\alpha} \ast (U_{\lambda_n,z_n}^{U_{\lambda_n,z_n}^{2_{\alpha}^{\ast}}-1}w_n)\Big)U_{\lambda_n,z_n}^{U_{\lambda_n,z_n}^{2_{\alpha}^{\ast}}-1}w_n dx\bigg]\\
    & + 2_{\alpha}^{\ast}|c_n|^{2(2_{\alpha}^{\ast}-1)}d_n^2 \bigg[\mu_{k+N+3}\int_{\mathbb{R}^N}\Big(|x|^{-\alpha} \ast (U_{\lambda_n,z_n}^{2_{\alpha}^{\ast}-1}w_n)\Big)U_{\lambda_n,z_n}^{2_{\alpha}^{\ast}-1}w_n dx \\
    & + (\mu_{k+N+3}-1)\int_{\mathbb{R}^N}\Big(|x|^{-\alpha} \ast U_{\lambda_n,z_n}^{2_{\alpha}^{\ast}}\Big)U_{\lambda_n,z_n}^{2_{\alpha}^{\ast}-2}w_n^2 dx\bigg]
    \\
    \leq & |c_n|^{2\cdot2_{\alpha}^{\ast}}\|U\|^2_{\mathcal{D}^{1,2}(\mathbb{R}^N)}
    + 2_{\alpha}^{\ast}|c_n|^{2(2_{\alpha}^{\ast}-1)}d_n^2\big[2(2_{\alpha}^{\ast}-\mu_{k+N+3})+1\big]+o(d_n^2),
    \end{split}
    \end{equation}
    since
    \begin{equation*}
    \begin{split}
    \int_{\mathbb{R}^N}(|x|^{-\alpha} \ast U_{\lambda_n,z_n}^{2_{\alpha}^{\ast}})U_{\lambda_n,z_n}^{2_{\alpha}^{\ast}} dx
    =\int_{\mathbb{R}^N}|\nabla U_{\lambda_n,z_n}|^2 dx
    =\|U\|^2_{\mathcal{D}^{1,2}(\mathbb{R}^N)}.
    \end{split}
    \end{equation*}
    Thus we have
    \begin{equation}\label{epkeyiyxbbf1}
    \begin{split}
    \left(\int_{\mathbb{R}^N}(|x|^{-\alpha} \ast u_n^{2_{\alpha}^{\ast}})u_n^{2_{\alpha}^{\ast}} dx\right)^{\frac{1}{2_{\alpha}^{\ast}}}
    \leq & c_n^2\left(\|U\|^2_{\mathcal{D}^{1,2}(\mathbb{R}^N)}
    + 2_{\alpha}^{\ast}\left[2(2_{\alpha}^{\ast}-\mu_{k+N+3})+1\right]c_n^{-2}d_n^2 \right)^{\frac{1}{2_{\alpha}^{\ast}}}+o(d_n^2) \\
    = & c_n^2\|U\|_{\mathcal{D}^{1,2}(\mathbb{R}^N)}
    + \left[2(2_{\alpha}^{\ast}-\mu_{k+N+3})+1\right]\big\|U\big\|^{\frac{2}{2_{\alpha}^{\ast}}-2}_{\mathcal{D}^{1,2}(\mathbb{R}^N)}d_n^2+o(d_n^2).
    \end{split}
    \end{equation}
    Therefore,
    \begin{equation}\label{epkeyiydzbb}
    \begin{split}
    & \int_{\mathbb{R}^N}|\nabla u_n|^2 dx
    -S_{HLS}\left(\int_{\mathbb{R}^N}(|x|^{-\alpha} \ast u_n^{2_{\alpha}^{\ast}})u_n^{2_{\alpha}^{\ast}} dx\right)^{\frac{1}{2_{\alpha}^{\ast}}} \\
    \geq & d_n^2+c_n^2\|U\|^2_{\mathcal{D}^{1,2}(\mathbb{R}^N)} \\
    & - S_{HLS}\left[c_n^2\|U\|^{\frac{2}{2_{\alpha}^{\ast}}}_{\mathcal{D}^{1,2}(\mathbb{R}^N)}
    + \left[2(2_{\alpha}^{\ast}-\mu_{k+N+3})+1\right]\big\|U\big\|^{\frac{2}{2_{\alpha}^{\ast}}-2}_{\mathcal{D}^{1,2}(\mathbb{R}^N)}d_n^2+o(d_n^2)\right]  \\
    = & d_n^2 \left[1-\left[2(2_{\alpha}^{\ast}-\mu_{k+N+3})+1\right]S_{HLS}\big\|U\big\|^{\frac{2}{2_{\alpha}^{\ast}}-2}_{\mathcal{D}^{1,2}(\mathbb{R}^N)}\right] \\
    & + c_n^2\|U\|^2_{\mathcal{D}^{1,2}(\mathbb{R}^N)}\left(1-S_{HLS}\big\|U\big\|^{\frac{2}{2_{\alpha}^{\ast}}-2}_{\mathcal{D}^{1,2}(\mathbb{R}^N)}\right)+o(d_n^2)  \\
    = & 2\big(\mu_{k+N+3}-2_{\alpha}^{\ast}\big)d_n^2+o(d_n^2),
    \end{split}
    \end{equation}
    for $d_n\to 0$ since $S_{HLS}=\|U\|_{\mathcal{D}^{1,2}(\mathbb{R}^6)}$, which shows that $(\ref{epkeyiydzbb})$ holds for $n$ sufficiently large, then (\ref{rtnmb}) follows immediately.

    Then, let us prove (\ref{rtnml}). We notice that (\ref{epkeyiybb}) also implies
    \begin{equation}\label{epkeyiybbb}
    \begin{split}
    \left(\int_{\mathbb{R}^N}\Big(|x|^{-\alpha} \ast u_n^{2_{\alpha}^{\ast}}\Big)u_n^{2_{\alpha}^{\ast}} dx\right)^{\frac{1}{2_{\alpha}^{\ast}}}
    \geq & \left(|c_n|^{2\cdot2_{\alpha}^{\ast}}\int_{\mathbb{R}^N}\Big(|x|^{-\alpha} \ast U_{\lambda_n,z_n}^{2_{\alpha}^{\ast}}\Big)U_{\lambda_n,z_n}^{2_{\alpha}^{\ast}} dx
    + o(d_n^2) \right)^{\frac{1}{2_{\alpha}^{\ast}}} \\
    = & c_n^2\big\|U\big\|^{\frac{2}{2_{\alpha}^{\ast}}}_{\mathcal{D}^{1,2}(\mathbb{R}^N)}+o(d_n^2),
    \end{split}
    \end{equation}
    therefore,
    \begin{equation}\label{epkeyiydzbl}
    \begin{split}
    & \int_{\mathbb{R}^N}|\nabla u_n|^{2_{\alpha}^{\ast}} dx
    -S_{HLS}\left(\int_{\mathbb{R}^N}\Big(|x|^{-\alpha} \ast |u_
    n|^{2_{\alpha}^{\ast}}\Big)|u_n|^{2_{\alpha}^{\ast}} dx\right)^{\frac{1}{2_{\alpha}^{\ast}}} \\
    \leq & d_n^2+c_n^2\|U\|^2_{\mathcal{D}^{1,2}(\mathbb{R}^N)}
    - S_{HLS}\left(c_n^2\big\|U\big\|_{\mathcal{D}^{1,2}(\mathbb{R}^N)}^{\frac{2}{2_{\alpha}^{\ast}}}+o(d_n^2)\right)  \\
    = & d_n^2+o(d_n^2)+ c_n^2\big\|U\big\|^2_{\mathcal{D}^{1,2}(\mathbb{R}^N)}\left(1- S_{HLS}\big\|U\big\|^{\frac{2}{2_{\alpha}^{\ast}}-2}_{\mathcal{D}^{1,2}(\mathbb{R}^N)}\right)  \\
    = & \big(1+o(1)\big)d_n^2,
    \end{split}
    \end{equation}
    for $d_n\to 0$, which shows that $(\ref{epkeyiydzbl})$ holds for $n$ sufficiently large, then (\ref{rtnml}) follows immediately.
    \end{proof}

    Now, we are ready to prove our main results.

\subsection{Proof of Theorem \ref{thmprt}.} We argue by contradiction. In fact, if the theorem is false then there exists a sequence $\{u_n\}\subset \mathcal{D}^{1,2}(\mathbb{R}^N)\backslash \mathcal{M}$ such that
    \begin{equation}\label{pmrl}
    \frac{\int_{\mathbb{R}^N}|\nabla u_n|^2 dx
    -S_{HLS}\left(\int_{\mathbb{R}^N}\Big(|x|^{-\alpha} \ast |u_n|^{2_{\alpha}^{\ast}}\Big)|u_n|^{2_{\alpha}^{\ast}} dx\right)^{\frac{1}{2_{\alpha}^{\ast}}}}
    {{\rm dist}(u_n,\mathcal{M})^2}
    \to +\infty,
    \end{equation}
    or
    \begin{equation}\label{pmrb}
    \frac{\int_{\mathbb{R}^N}|\nabla u_n|^2 dx
    -S_{HLS}\left(\int_{\mathbb{R}^N}\Big(|x|^{-\alpha} \ast |u_n|^{2_{\alpha}^{\ast}}\Big)|u_n|^{2_{\alpha}^{\ast}} dx\right)^{\frac{1}{2_{\alpha}^{\ast}}}}
    {{\rm dist}(u_n,\mathcal{M})^2}
    \to 0,
    \end{equation}
    as $n\to \infty$. By homogeneity, we can assume that $\|u_n\|_{\mathcal{D}^{1,2}(\mathbb{R}^N)}=1$, and after selecting a subsequence we can assume that ${\rm dist}(u_n,\mathcal{M})\to \xi\in[0,1]$ since ${\rm dist}(u_n,\mathcal{M})=\inf_{c\in\mathbb{R}, \lambda>0, z\in\mathbb{R}^N}\|u_n-cU_{\lambda,z}\|_{\mathcal{D}^{1,2}(\mathbb{R}^N)}\leq \|u_n\|_{\mathcal{D}^{1,2}(\mathbb{R}^N)}$. If (\ref{pmrl}) holds, it must be $\xi=0$, then we have a contradiction by Lemma \ref{lemma:rtnm2b}, thus (\ref{pmrl}) does not hold.  If $\xi=0$, then (\ref{pmrb}) does also not hold by Lemma \ref{lemma:rtnm2b}.

    The only possibility  is that $\xi>0$ and (\ref{pmrb}) holds, that is
    \[{\rm dist}(u_n,\mathcal{M})\to \xi>0\quad \mbox{and}\quad \int_{\mathbb{R}^6}|\nabla u_n|^2 dx
    -S_{HLS}\left(\int_{\mathbb{R}^N}\Big(|x|^{-\alpha} \ast |u_n|^{2_{\alpha}^{\ast}}\Big)|u_n|^{2_{\alpha}^{\ast}} dx\right)^{\frac{1}{2_{\alpha}^{\ast}}}\to 0,\]
    as $n\to \infty$. Then we must have
    \begin{equation}\label{wbsi}
    \left(\int_{\mathbb{R}^N}\Big(|x|^{-\alpha} \ast |u_n|^{2_{\alpha}^{\ast}}\Big)|u_n|^{2_{\alpha}^{\ast}} dx\right)^{\frac{1}{2_{\alpha}^{\ast}}}\to \frac{1}{S_{HLS}},\quad \|u_n\|_{\mathcal{D}^{1,2}(\mathbb{R}^N)}=1.
    \end{equation}
    Combining Lions' concentration-compactness principle \cite{Li85-1,Li85-2} and \cite[Theorem 1.3]{DGY22}, there exist two sequences of numbers $\lambda_n,z_n$ such that
    \begin{equation*}
    \lambda_n^{\frac{N-2}{2}}u_n(\lambda_n (x-z_n))\to U_0\quad \mbox{in}\quad \mathcal{D}^{1,2}(\mathbb{R}^N)\quad \mbox{as}\quad n\to \infty,
    \end{equation*}
    for some $U_0\in\mathcal{M}$, which implies
    \begin{equation*}
    {\rm dist}(u_n,\mathcal{M})={\rm dist}\left(\lambda_n^{\frac{N-2}{2}}u_n(\lambda_n (x-z_n)),\mathcal{M}\right)\to 0 \quad \mbox{as}\quad n\to \infty,
    \end{equation*}
    this is a contradiction.
    \qed

\subsection{Proof of Theorem \ref{thmprtb}.}
    We follow the arguments as in \cite{BE91} and also \cite{WaWi03}. We notice that (\ref{rtbdy46}) is invariant whenever we replace $u$ by $|u|$.
    By the rearrangement inequality we have
    \begin{equation*}
    \big\|\nabla u^*\big\|_{L^2(\Omega)}\leq \big\|\nabla u\big\|_{L^2(\Omega)},\quad \big\|u^*\big\|_{L^{\frac{N}{N-2}}_w(\Omega)}= \big\|u\big\|_{L^{\frac{N}{N-2}}_w(\Omega)},
    \end{equation*}
    and
    \begin{equation*}
    \int_{\Omega}\int_{\Omega}\frac{|u^*(y)|^{2_{\alpha}^{\ast}}|u^*(x)|^{2_{\alpha}^{\ast}}}{|x-y|^\alpha} dydx\geq \int_{\Omega}\int_{\Omega}\frac{|u(y)|^{2_{\alpha}^{\ast}}|u(x)|^{2_{\alpha}^{\ast}}}{|x-y|^\alpha} dydx.
    \end{equation*}
    Here, $u^*$ denotes the symmetric decreasing rearrangement of the nonnegative function $u$ extended to zero outside $\Omega$, see \cite{BrL85,Lieb77,Ta76}. Therefore it is suffices to consider the case in which $\Omega$ is a ball of radius $R$ (chosen to have the same volume as the original domain) and $u$ is symmetric decreasing, that is
    \begin{equation}\label{rtbdlb}
    \int_{B_R(0)}|\nabla u|^2 dx
    -S_{HLS}\left(\int_{B_R(0)}\int_{B_R(0)}\frac{|u(x)|^{2_{\alpha}^{\ast}}|u(y)|^{2_{\alpha}^{\ast}}}{|x-y|^{\alpha}} dxdy\right)^{\frac{1}{2_{\alpha}^{\ast}}}
    \geq B' \big\|u\big\|^2_{L^{\frac{N}{N-2}}_w(B_R(0))},
    \end{equation}
    $\forall u\in \mathfrak{R}_0^{1,2}(B_R(0))$. Here $\mathfrak{R}_0^{1,2}(B_R(0))$ consists of all nonnegative and radial functions in $\mathcal{D}^{1,2}_0(B_R(0))$ with support in the closed ball $\overline{B_R(0)}$. Assume that (\ref{rtbdlb}) is not true, then there exists a sequence $\{u_n\}\subset \mathfrak{R}_0^{1,2}(B_R(0))$ such that
    \begin{equation}\label{rtbdlbc}
    \frac{\int_{B_R(0)}|\nabla u_n|^2 dx
    -S_{HLS}\left(\int_{B_R(0)}\int_{B_R(0)}\frac{|u_n(x)|^{2_{\alpha}^{\ast}}|u_n(y)|^{2_{\alpha}^{\ast}}}{|x-y|^{\alpha}} dxdy\right)^{\frac{1}{2_{\alpha}^{\ast}}}}
    {\big\|u_n\big\|^2_{L^{\frac{N}{N-2}}_w(B_R(0))}}
    \to 0,
    \end{equation}
    as $n\to \infty$. By homogeneity, we can assume that $\|\nabla u_n\|_{L^2(B_R(0))}=1$. Since
    \[\big\|u_n\big\|_{L^{\frac{N}{N-2}}_w(B_R(0))}\leq \big\|u_n\big\|_{L^{\frac{N}{N-2}}(B_R(0))}\leq |B_R(0)|^{1/2}\|u_n\|_{L^{2^{\ast}}(B_R(0))}\leq \mathcal{S}^{-1/2}|B_R(0)|^{1/2},\]
    we must have
    \begin{equation*}
    \left(\int_{B_R(0)}\int_{B_R(0)}\frac{|u_n(x)|^{2_{\alpha}^{\ast}}|u_n(y)|^{2_{\alpha}^{\ast}}}{|x-y|^{\alpha}} dxdy\right)^{\frac{1}{2_{\alpha}^{\ast}}}
    \to \frac{1}{S_{HLS}}.
    \end{equation*}
    From the Lions' concentration-compactness principle \cite{Li85-1,Li85-2} and see also \cite[Theorem 1.3]{DGY22} directly, we can find two sequences $\{c_{n}\}\subset\mathbb{R}$ and $\{\lambda_{n}\}\subset\mathbb{R}^+$ satisfying $c_{n}\to 1$ (up to a constant) and $\lambda_{n}\to +\infty$ as $n\to \infty$ such that
    \[{\rm dist}(u_n,\mathcal{M})=\big\|u_n-c_{n}U_{\lambda_{n},0}\big\|_{\mathcal{D}^{1,2}(\mathbb{R}^N)}\to 0.\]
    Let
    \[a:=U(0)=
    \mathcal{S}^{\frac{(N-\alpha)(2-N)}{4(N-\alpha+2)}}[C(N,\alpha)]^{\frac{2-N}{2(N-\alpha+2)}}[N(N-2)]^{\frac{N-2}{4}}.\]
    Since the support of $u_n$ is contained in $\overline{B_R(0)}$ we obtain
     \begin{equation}\label{rtbdlbcdg}
     \begin{split}
    {\rm dist}(u_n,\mathcal{M})^2
    = & \big\|u_n-c_{n}U_{\lambda_{n},0}\big\|^2_{\mathcal{D}^{1,2}(\mathbb{R}^N)}
    \geq c_{n}^2\int_{|x|\geq R}|\nabla U_{\lambda_{n},0}|^2dx \\
    = & (N-2)^2 a^2\omega_{N-1} c_{n}^2\lambda_n^{N+2}\int^{+\infty}_{R} \frac{r^{N+1}}{(1+\lambda^2_nr^2)^N}dr\\
    = & (N-2)^2 a^2\omega_{N-1} c_{n}^2\int^{+\infty}_{R\lambda_n} \frac{s^{N+1}}{(1+s^2)^N}ds\\
    \geq & C^2_{R}c_{n}^2\lambda_{n}^{-N+2},
    \end{split}
    \end{equation}
    for some constant $C_{R}>0$ as $n$ large enough, where $\omega_{N-1}$ is the volume of $\mathbb{S}^{N-1}$, that is
    \begin{equation*}
     \begin{split}
    {\rm dist}(u_n,\mathcal{M})\geq & C_{R}|c_{n}|\lambda_{n}^{-\frac{N-2}{2}}.
    \end{split}
    \end{equation*}
     Therefore, with
    \[U_{\lambda_{n},0}(R)=a\lambda_n^{\frac{N-2}{2}}(1+\lambda^2_nR^2)^{-\frac{N-2}{2}}:=a_{R,n}\to 0\quad \mbox{as}\quad n\to \infty,\]
    we have
    \begin{equation}\label{rtbdlbcdl}
     \begin{split}
    \big\|u_n\big\|_{L^{\frac{N}{N-2}}_w(B_R(0))}
    \leq & \big\|c_{n}(U_{\lambda_{n},0}-a_{R,n})_+\big\|_{L^{\frac{N}{N-2}}_w(B_R(0))}
    + \big\|u_n-c_{n}(U_{\lambda_{n},0}-a_{R,n})_+\big\|_{L^{\frac{N}{N-2}}_w(B_R(0))} \\
    \leq & |c_{n}|\lambda_{n}^{-\frac{N-2}{2}}\big\|U\big\|_{L^{\frac{N}{N-2}}_w(B_R(0))}
    +C'_{R}\big\|u_n-c_{n}U_{\lambda_{n},0}\big\|_{L^{2^{\ast}}(\mathbb{R}^N)} \\
    \leq & C''_{R}{\rm dist}(u_n,\mathcal{M})
    \end{split}
    \end{equation}
    for some constant $C''_{R}>0$ depending only on $R$ as $n$ large enough. Here $g_+$ denotes the positive part of $g$. Thus combining (\ref{rtbdlbc}) and (\ref{rtbdlbcdl}), Theorem \ref{thmprt} yields a contradiction then (\ref{rtbdlb}) follows.

    Finally, let us consider that (\ref{rtbdy46}) does not hold for the strong norm, that is there is a sequence $\{u_n\}\subset \mathcal{D}_0^{1,2}(\Omega)$ such that
    \begin{equation}\label{rtbdlbcs}
    \frac{\int_{\Omega}|\nabla u_n|^2 dx
    -S_{HLS}\left(\int_{\Omega}\int_{\Omega}\frac{|u_n(x)|^{2_{\alpha}^{\ast}}|u_n(y)|^{2_{\alpha}^{\ast}}}{|x-y|^4} dxdy\right)^{\frac{1}{2_{\alpha}^{\ast}}}}
    {\big\|u_n\big\|^2_{L^{\frac{N}{N-2}}(\Omega)}}
    \to 0,
    \end{equation}
    as $n\to \infty$. Here we consider $\Omega=B_R(0)$ and $\{u_n\}\subset \mathfrak{R}_0^{1,2}(B_R(0))$. Indeed, for general bounded domain $\Omega$ we have $B_{R}(x_0)\subset \Omega\subset B_{R'}(x_0)$ for some $R'\geq R>0$ and $x_0\in\mathbb{R}^6$, then we can consider 
    $u_n\in \mathfrak{R}_0^{1,2}(B_{R}(x_0))$ with $u_n(x)\equiv 0$ if $x\in \mathbb{R}^6\backslash B_{R}(x_0)$. 
    We notice that (\ref{rtbdlbcdg}) also implies
    \begin{equation}\label{rtbdlbcdgs}
     \begin{split}
    {\rm dist}(u_n,\mathcal{M})\leq C^0_{R}c_{n}\lambda_{n}^{-\frac{N-2}{2}}\to 0,
    \end{split}
    \end{equation}
    for some constant $C^0_{R}>0$ as $n$ large enough.
    Furthermore,
    \begin{equation}\label{rtbdlbcdls}
     \begin{split}
    \big\|u_n\big\|_{L^{\frac{N}{N-2}}(B_R(0))}
    \geq & \big\|c_{n}U_{\lambda_{n},0}\big\|_{L^{\frac{N}{N-2}}(B_R(0))}
    - \big\|u_n-c_{n}U_{\lambda_{n},0}\big\|_{L^{\frac{N}{N-2}}(B_R(0))} \\
    \geq & C'_{R}|c_{n}|\lambda_{n}^{-\frac{N-2}{2}}\log (R\lambda_n)
    -C''_{R}\big\|u_n-c_{n}U_{\lambda_{n},0}\big\|_{\mathcal{D}^{1,2}(\mathbb{R}^N)} \\
    \geq & C'_{R}[\log (R\lambda_n)-C'''_{R}]{\rm dist}(u_n,\mathcal{M}).
    \end{split}
    \end{equation}
    Then by Theorem \ref{thmprt},
    \begin{equation}\label{rtbdlbcsl}
    \begin{split}
    & \frac{\int_{B_R(0)}|\nabla u_n|^2 dx
    -S_{HLS}\left(\int_{B_R(0)}\int_{B_R(0)}\frac{|u_n(x)|^{2_{\alpha}^{\ast}}|u_n(y)|^{2_{\alpha}^{\ast}}}{|x-y|^\alpha} dxdy\right)^{\frac{1}{2_{\alpha}^{\ast}}}}
    {\big\|u_n\big\|^2_{L^{\frac{N}{N-2}}(\Omega)}}  \\
    \leq & \frac{\int_{B_R(0)}|\nabla u_n|^2 dx
    -S_{HLS}\left(\int_{B_R(0)}\int_{B_R(0)}\frac{|u_n(x)|^{2_{\alpha}^{\ast}}|u_n(y)|^{2_{\alpha}^{\ast}}}{|x-y|^\alpha} dxdy\right)^{\frac{1}{2_{\alpha}^{\ast}}}}
    {C'_{R}[\log (R\lambda_n)-C'''_{R}]{\rm dist}(u_n,\mathcal{M})} \\
    \leq & \frac{B_2}{C'_{R}[\log (R\lambda_n)-C'''_{R}]}\to 0,
    \end{split}
    \end{equation}
    as $n\to \infty$, thus (\ref{rtbdlbcs}) holds 
    and the proof now is complete.
    \qed
\\

\noindent{Acknowledgement}

Shengbing Deng and Xingliang Tian were supported by National Natural Science Foundation of China (No. 11971392). Minbo Yang and Shunneng Zhao were  supported by National Natural Science Foundation of China (No. 11971436, No. 12011530199) and Natural Science Foundation of Zhejiang Province (No. LZ22A010001, No. LD19A010001).



\begin{thebibliography}{99}

\bibitem{AGP99}
Ambrosetti, A., Garcia Azorero, J., Peral, I.: {\em Perturbation of $\Delta u+u^{\frac{N+2}{N-2}}=0$, the scalar curvature problem in $\mathbb{R}^N$, and related topics}. J. Funct. Anal. {\bf 165}(1), 117--149 (1999)

\bibitem{AGSY17}
Alves, C. O., Gao, F., Squassina, M., Yang, M.: {\em Singularly perturbed critical Choquard equations}. J. Diff. Equ. {\bf 263}, 3943--3988 (2017)

\bibitem{BE91}
Bianchi, G., Egnell, H.: {\em A note on the Sobolev inequality}. J. Funct. Anal. {\bf 100}(1), 18-24 (1991)

\bibitem{BrL85}
Br\'{e}zis H., Lieb, E.: {\em Sobolev inequalities with remainder terms}. J. Funct. Anal. {\bf 62}, 73--86 (1985).

\bibitem{BN83}
Br\'{e}zis, H., Nirenberg, L.: {\em Positive solutions of nonlinear elliptic equations involving critical Sobolev exponents}. Comm. Pure Appl. Math. {\bf 36}(4), 437--477 (1983)


\bibitem{BWW03}
Bartsch, T., Weth, T., Willem, M.: {\em A Sobolev inequality with remainder term and critical equations on domains with topology for the polyharmonic operator}. Calc. Var. Partial Dif. {\bf 18}, 253--268 (2003).

\bibitem{Ca17}
Carlen, E.: {\em Duality and stability for functional inequalities}. Ann. Fac. Sci. Toulouse Math. (6) {\bf 26}, no. 2, 319--350 (2017)


\bibitem{CFM18}
Ciraolo, G., Figalli, A., Maggi, F.: {\em A quantitative analysis of metrics on $\mathbb{R}^N$ with almost constant
positive scalar curvature, with applications to fast diffusion flows}. Int. Math. Res. Not. {\bf 2017}, 6780--6797 (2018)

\bibitem{CFW13}
Chen, S., Frank, R., Weth, T.: {\em Remainder terms in the fractional Sobolev inequality}. Indiana Univ. Math. J. {\bf 62}(4), 1381--1397 (2013)

\bibitem{CKN84}
Caffarelli, L., Kohn R., Nirenberg, L.: {\em First order interpolation inequalities with weights}. Compos. Math. {\bf 53}, 259--275 (1984)

\bibitem{CGS89}
Caffarelli, L., Gidas, B., Spruck, J.: {\em  Asymptotic symmetry and local behavior of semilinear elliptic equations with critical Sobolev growth}. Comm. Pure
Appl. Math. {\bf 42}(3), 271--297 (1989)

\bibitem{DE22}
Dolbeault, J., Esteban, M.: {\em Hardy-Littlewood-Sobolev and related inequalities: stability}. The physics and mathematics of Elliott Lieb-the 90th anniversary. Vol. I, 247--268, EMS Press, Berlin, [2022]


\bibitem{DSW21}
Deng, B., Sun, L., Wei, J.: {\em Sharp quantitative estimates of Struwe's Decomposition}. Preprint. \url{arXiv:2103.15360} [math.AP]


\bibitem{DGY22}
Du, L., Gao, F., Yang, M.: {\em On elliptic equations with Stein-Weiss type convolution parts}. Math. Z. {\bf 301}(2), 2185--2225 (2022)

\bibitem{DY19}
Du, L., Yang, M.: {\em Uniqueness and nondegeneracy of solutions for a critical nonlocal equation}. Discrete Contin. Dyn. Syst. A. {\bf 39}(10), 5847--5866 (2019)

\bibitem{FN19}
Figalli, A., Neumayer, R.: {\em Gradient stability for the Sobolev inequality: the case $p\geq 2$}. J. Eur. Math. Soc. (JEMS) {\bf 21}(2), 319--354 (2019)

\bibitem{FG20}
Figalli, A., Glaudo, F.: {\em On the sharp stability of critical points of the Sobolev inequality}. Arch. Ration. Mech. Anal. {\bf 237}(1), 201--258 (2020)

\bibitem{FZ22}
Figalli, A., Zhang Y.: {\em Sharp gradient stability for the Sobolev inequality}. Duke Math. J., 171 (2022), no. 12, 2407--2459.

\bibitem{GMYZ22}
Gao, F., Moroz, V., Yang, M., Zhao, S.: {\em Construction of infinitely many solutions for a critical Choquard equation via local Poho\v{z}aev identities}. Calc. Var. Partial Differential Equations,
61(6):222, 2022

\bibitem{GHPS19}
Guo, L., Hu, T., Peng, S., Shuai, W.: {\em Existence and uniqueness of solutions for Choquard equation involving Hardy-Littlewood-Sobolev critical exponent}. Calc. Var. Partial Dif. {\bf 58}(4), Paper No. 128, 34 pp (2019)

\bibitem{GNN79}
Gidas, B., Ni, W., Nirenberg, L.: {\em Symmetry and related properties via the maximum principle}. Commun. Math. Phy. {\bf 68}(3), 209--243 (1979)


\bibitem{GWY20}
Giacomoni, J., Wei, Y., Yang, M.: {\em Nondegeneracy of solutions for a critical Hartree equation}.  Nonlinear Anal. {\bf 199}, 111969, 12 pp (2020)

\bibitem{GY18}
Gao, F., Yang, M.: {\em The Brezis-Nirenberg type critical problem for nonlinear Choquard equation}. Sci. China Math. {\bf 61}, 1219--1242 (2018)

\bibitem{Le18}
Lei, Y.: {\em Liouville theorems and classification results for a nonlocal Schr\"{o}dinger equation}. Discrete Contin. Dyn. Syst. {\bf 38}, 5351--5377
(2018)

\bibitem{XTang}
 Li, X., Tang, X., Xu, G.: {\em  Nondegeneracy of the positive solutions for critical nonlinear Hartree
equation in $\mathbb{R}^6$}.   Proc. Amer. Math. Soc. {\bf 150}, 5203--5216 (2022)

\bibitem{XLi}
 Li, X., Liu, C. Tang, X., Xu, G.: {\em  Nondegeneracy of positive bubble solutions for generalized energy-critical Hartree equations}.  Preprint. \url{arXiv:2304.04139} [math.AP]

\bibitem{Lieb77}
Lieb, E.: {\em Existence and uniqueness of the minimizing solution of Choquard's nonlinear equation}. Studies in Appl. Math. {\bf 57}(2), 93--105 (1976/77)

\bibitem{Lieb83}
Lieb, E.: {\em Sharp constants in the Hardy-Littlewood-Sobolev and related inequalities}. Ann. of Math. {\bf 118}, 349--374 (1983)


\bibitem{Li85-1}
Lions, P.: {\em The concentration-compactness principle in the calculus of variations, the limit case, part \uppercase\expandafter{\romannumeral 1}}.  Rev. Mat. Iberam. {\bf 1}(1), 145--201 (1985)

\bibitem{Li85-2}
Lions, P.: {\em The concentration-compactness principle in the calculus of variations, the limit case, part \uppercase\expandafter{\romannumeral 2}}.  Rev. Mat. Iberam. {\bf 1}(2), 45--121 (1985)



\bibitem{LW00}
Lu, G., Wei, J.: {\em  On a Sobolev inequality with remainder terms}. Proc. Amer. Math. Soc. {\bf 128}(1), 75--84 (2000)




\bibitem{St84}
Struwe, M.: {\em A global compactness result for elliptic boundary value problems involving limiting nonlinearities}. Math. Z. {\bf 187}(4), 511--517 (1984)

\bibitem{SV13}
Servadei, R., Valdinoci, E.: {\em Variational methods for non-local operators of elliptic type}. Discrete Contin. Dyn. Syst. {\bf 33}(5), 2105--2137 (2013)

\bibitem{Ta76}
Talenti, G.: {\em Best constant in Sobolev inequality}. Ann. Mat. Pura Appl. {\bf 110}, 353-372 (1976)

\bibitem{WaWi03}
Wang, Z., Willem, M.: {\em Caffarelli-Kohn-Nirenberg inequalities with remainder terms}. J. Funct. Anal. {\bf 203}(2), 550--568 (2003)


\bibitem{WW22}
Wei, J., Wu, Y.: {\em On the stability of the Caffarelli-Kohn-Nirenberg inequality}. Math. Ann., 1--38 (2022)





\end{thebibliography}
    \end{document}